\newtheorem{defn}{Definition}
\newtheorem{thm}{Theorem}
\newtheorem{lemma}{Lemma}
\newtheorem{rmk}{Remark}
\journal{-}
\begin{document}

\begin{frontmatter}

\title{Loop Homology of Bi-secondary Structures II}


\author[math2,bi]{Andrei C. Bura}
\ead{anbur12@vt.edu}
\author[bii]{Qijun He\corref{mycorrespondingauthor}}
\ead{qhe196@gmail.com}
\author[bii,math]{Christian M. Reidys}
\ead{duck@santafe.edu}


\cortext[mycorrespondingauthor]{Corresponding author}
\address[math2]{Department of Mathematics, Virginia Tech, 225 Stanger Street,
Blacksburg, VA 24061-1026}
\address[bi]{Biocomplexity Institute of Virginia Tech, 1015 Life Sciences Circle 
Blacksburg, VA 24061}
\address[bii]{Biocomplexity Institute and Initiative, University of Virginia, 995 Research Park Boulevard, Charlottesville, VA 22911}
\address[math]{Department of Mathematics, University of Virginia, 141 Cabell Dr, Charlottesville, VA 22903}

\begin{abstract}
  In this paper we further describe the features of the topological space $K(R)$ obtained from the loop nerve of $R$, for $R=(S,T)$ a bi-secondary structure. We will first identify certain distinct combinatorial structures in the arc diagram of $R$ which we will call crossing components. The main theorem of this paper shows that the total number of these crossing components equals the rank of $H_2(R)$, the second homology group of the loop nerve. 
\end{abstract}

\begin{keyword}
RNA, bi-secondary structure, loop, nerve, simplicial homology. 
\end{keyword}

\end{frontmatter}


\section{Introduction}

In \citep{bura2019loop} Loop Homology of Bi-secodary Structures, we proved that $H_2(R)$ is free abelian. However, we've yet to identify the combinatorial object within the diagram of the bi-structure $R$ that contributes a generator to $H_2(R)$. In the the following, we will identify the precise sub-structures of a given bi-secondary structure $R$, that when considered within the loop nerve $K(R)$, correspond to sub-complexes that triangulate $2$-spheres. These sub-structures we will call crossing components (CCs). We will show that there is a bijective correspondence between any minimal generating set of $H_2(R)$ and the set of CCs of $R$ and thus, the number of CCs equals the rank of $H_2(R)$.

\section{Secondary and Bi-Secondary Structures}

\begin{defn}
  An RNA diagram $S$ over $[n]$, is a vertex-labeled graph whose vertices are drawn on the horizontal axis
  and labeled by $[n]=\{1,\ldots,n\}$. An \emph{arc} $\mu=(i,j),i<j$, is an ordered pair of vertices, which represents
  the base pairing between the $i$-th and $j$-th nucleotides in the RNA structure. We denote by $b(\mu)=i$ and $e(\mu)=j$ the start and endpoints of an arc $\mu\in S$. Furthermore, each vertex can
  be paired with at most one other vertex, and the arc that connects them is drawn in the upper half-plane. We
  introduce two ``formal'' vertices associated with positions $0$ and $n + 1$, respectively, closing any diagram
  by the arc $(0, n + 1)$, called the rainbow. The set $[0,n+1]$ is called the diagram's \emph{backbone}. 
\end{defn}

\begin{defn}
  Let $S$ be an RNA diagram over $[n]$. Two arcs $(i, j)$ and $(p, q)$ are called \emph{crossing} if and only if
  $i < p < j < q$. $S$ is called a \emph{secondary structure} if it does not contain any crossing arcs. The arcs
  of $S$ can be endowed with a partial order as follows: $(k,l)\prec_S (i,j)\iff i < k < l< j$. We denote this by
  $(S,\prec_S)$ and call it the arc poset of $S$. Finally, an \emph{interval} $[i,j]$ on the backbone is the set
  of vertices $\{i,i+1,\ldots,j-1,j\}$.
\end{defn}

\begin{defn}
  Let $S$ be a secondary structure over $[n]$. A \emph{loop} $s$ in $S$ is a subset of vertices, represented as
  a disjoint union of a sequence of contiguous blocks on the backbone of $S$, $s=\dot\bigcup_{i=1}^k [a_i,b_i]$,
  such that $(a_1,b_k)$ and $(b_i,a_{i+1})$, for $1\leq i\leq k-1$, are arcs and such that any other
  interval-vertices are unpaired. Let $\alpha_s$ denote the unique, maximal arc $(a_1,b_k)$ of
  the loop.
\end{defn}
 
In this paper we shall identify a secondary structure with its set of loops. 

\begin{rmk}
Let $S$ be a secondary structure over $[n]$ and $s=\dot\bigcup_{i=1}^k [a_i,b_i]$ a loop in $S$, then
\begin{itemize}
    \item Each unpaired vertex is contained in exactly one loop.
    \item The arc $(a_1,b_k)$ is maximal w.r.t.~$\prec_S$ among all arcs contained in $s$, i.e.~there is a bijection between
        arcs and loops, mapping each loop to its maximal arc.
    \item The Hasse diagram of the $S$ arc-poset is a rooted tree $\text{Tr}(S)$, having the rainbow arc
        as the root.
    \item Each non-rainbow arc appears in exactly two loops.
\end{itemize}
\end{rmk}

    Let $X=\{x_0,x_1,\ldots,x_m\}$ be a collection of finite sets. We call
  $Y=\{x_{i_0},\ldots,x_{i_d}\}\subseteq X$ a \emph{$d$-simplex} of $X$ iff
  $\bigcap_{k=0}^d x_{i_k}\neq \varnothing$. We set $\Omega(Y)=\bigcap_{k=0}^d x_{i_k}$ and
  denote by $\omega(Y)=|\Omega(Y)|\neq 0$.
  Let $K_d(X)$ be the set of all $d$-simplices of $X$, then the \emph{nerve} of $X$ is
  $$
  K(X)=\dot\bigcup_{d=0}^{\infty} K_d(X)\subseteq 2^{X}.
  $$
  A $d'$-simplex $Y'\in K(X)$ is called a $d'$-face of $Y$ if $d'<d$ and $Y'\subseteq Y$. By construction,
  $K(X)$ is an abstract simplicial complex.

Let $S$ be a secondary structure over $[n]$. The geometric realization of $K(S)$, the nerve over the set
of loops of $S$, is a tree.
\begin{defn}
  Given two secondary structures $S$ and $T$ over $[n]$, we refer to the pair $R=(S,T)$ as a bi-secondary
  structure. Let $S\cup T$ be the loop set of $R$ and $K(R)=\dot\bigcup_{d=0}^{\infty} K_d(R)$ be its nerve
  of loops.
\end{defn}

We represent the arc diagram of a bi-secondary structure $R=(S,T)$ with the arcs of $S$ in the upper half plane while the
arcs of $T$ reside in the lower half plane.

Let $R=(S,T)$ be a bi-secondary structure with loop nerve $K(R)$. A $1$-simplex
$Y=\{r_{i_0},r_{i_1}\}\in K_1(R)$ is called \emph{pure} if $r_{i_0}$ and $r_{i_1}$
are loops in the same secondary structure and \emph{mixed}, otherwise. Any $2$-simplex $Y\in K_2(R)$ had exactly one pure edge and two mixed edges as its $1$-faces (See Part Two: Loop Homology of Bi-secodary Structures).

\begin{defn}
Given $R=(S,T)$, a bi-secondary structure on $[n]$, $R$ is called a \emph{non-overlapping} bi-secondary structure, if any nucleotide $q\in\{1,\ldots,n\}$ has degree at most three in the arc diagram of $R$.
\end{defn}

\section{Decorations and Closures}

\begin{defn}
Let $R=(S,T)$ be a bi-secondary structure with loop set $R=S\cup T$. We define the arc \emph{line graph} of $R$ to be $G=(R,E)$ where $$E\ni e=(s\in S,t\in T)\Leftrightarrow $$$$b(\alpha_s)< b(\alpha_t)<e(\alpha_s)<e(\alpha_t)\text{ or } b(\alpha_t)< b(\alpha_s)<e(\alpha_t)<e(\alpha_s)$$ i.e. the arc $\alpha_s$ intersects the arc $\alpha_t$ if we were to flip $\alpha_t$ to the upper half plane. In this case we say the two arcs $\alpha_s$ and $\alpha_t$ are $\it{crossing}$. We call the set of arcs associated to a non-trivial connected component of this graph, a \emph{crossing component} (CC) of $R$. By non-trivial, we mean the vertex size of such a component must be strictly larger than $1$. When convenient, and when no possibility of confusion exists, we will also identify $X$ with the set of loops whose unique maximal arcs are the elements of $X$. We denote the set of all CCs of $R$ by $\chi(R)$.
\end{defn}

\begin{defn}
Let $R=(S,T)$ be a non-overlapping bi-secondary structure on $[n]$. Let $q\in \{1,\ldots,n\}$ be a nucleotide of degree exactly three in the arc diagram of $R$. Furthermore, let $Y\in K_2(R)$ be a $2$-simplex. We call a copy of $Y$, indexed by $q$ and denoted by $Y_q$, a \emph{decoration} of $Y$ at $q$ if $q\in \Omega(Y)$. We denote by $K_2(R)^*$ the set of all possible decorations of elements of $K_2(R)$.
\end{defn}

\begin{rmk}\label{purearc}
We make the following observations about decorations
\begin{itemize}
    \item Clearly $K_2(R)\xhookrightarrow{} K_2(R)^*$. 
    \item Since $1\le\omega(Y)\le 2$ any $Y\in K_2(R)$ has at most two, and at least one decoration in $K_2(R)^*$. 
    \item Let $Y=[x,y,z]\in K_2(R)$  and assume that $[x,y]$ is the pure edge of $Y$ Note then that $x\le y\le z$ in terms of the simplicial ordering on $K(R) $(See Part Two: Loop Homology of Bi-secodary Structures).Then, for any decoration $Y_q\in K_2(R)^*$ we have $q=b(\alpha_x)$ or $q=e(\alpha_x)$. Hence, to each decoration $Y_q\in K_2(R)^*$ there corresponds a unique arc $\gamma(Y)=\alpha_x$ such that either $b(\gamma(Y))=q$ or $e(\gamma(Y))=q$. We call this arc the \emph{pure arc} of the decoration. (See Figure~\ref{purearcdecoration})
\end{itemize}
\end{rmk}

\begin{figure}[htbp]
    \centering
    \includegraphics[width=0.25\textwidth]{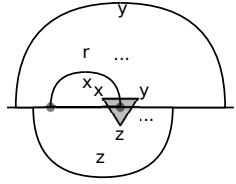}
    \caption{
      A decoration $Y_{e(r)}=[x,y,z]_{e(r)}$, and its pure arc $\gamma(Y)=r=\alpha_x$.}
    \label{purearcdecoration}
\end{figure}

\begin{defn}
  Let $K=\dot\bigcup_{d=0}^\infty K_d$ be an abstract simplicial complex and let $Y\in K_d$ be a
  $d$-simplex. Let $Y'$ be a $k$-face of $Y$, where $k<d$.
  We say $Y'$ is \emph{$Y$-exposed} if and only if any simplex of $K$ that contains $Y'$ as a $k$-face must be a face of $Y$.
\end{defn}

\begin{lemma}\label{exposededge}
Let $R=(S,T)$ be a non-overlapping bi-secondary structure. For any $2$-simplex $Y\in K_2(R)$, if $\gamma(Y)$ is not contained in any CC of $R$, then the pure edge of $Y$ is $Y$-exposed. 
\end{lemma}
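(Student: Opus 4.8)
The plan is to translate the combinatorial hypothesis on $\gamma(Y)$ into a statement about the loops of the opposite structure and then exploit the uniqueness of the loop containing an unpaired vertex. Assume without loss of generality that the pure edge of $Y=[x,y,z]$ lies in $S$, so that $x,y$ are loops of $S$ and (by the fact that every $2$-simplex has exactly one pure edge) $z$ is a loop of $T$. Write $\gamma(Y)=\alpha_x=(a,b)$ with $a=b(\alpha_x)$ and $b=e(\alpha_x)$. First I would record two preliminaries. Since $x$ and $y$ are adjacent loops of the secondary structure $S$ sharing precisely the arc $\alpha_x$, the tree structure of the $S$-nerve gives $\Omega([x,y])=\{a,b\}$. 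Next, under the non-overlapping hypothesis every nucleotide is paired in at most one of $S,T$, hence lies in at most three loops (two in the structure where it is paired, one in the other); therefore $K(R)$ has dimension at most $2$ and the only simplices properly containing $[x,y]$ as a $1$-face are $2$-simplices, whose third vertex must be a loop $w\in T$ with $w\cap\{a,b\}\neq\varnothing$. Thus it suffices to show that such a $w$ is unique and equals $z$.

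The reduction to a single loop is where the hypothesis enters. Since $\gamma(Y)=\alpha_x$ lies in no CC, it is an isolated vertex of the arc line graph $G$, i.e.\ $\alpha_x$ crosses no arc of $T$. By non-overlapping, $a$ and $b$ are unpaired in $T$, so each is contained in exactly one loop of $T$; call them $w_a$ and $w_b$. The key claim is $w_a=w_b$. Let $w_a=\dot\bigcup_{i=1}^k[a_i,b_i]$ with maximal arc $(a_1,b_k)$ and child arcs $(b_j,a_{j+1})\in T$. Comparing $\alpha_x=(a,b)$ with the maximal arc, using that it is non-crossing and that $a\neq a_1,b_k$ (as $a$ is $T$-unpaired), forces $a_1<a<b<b_k$, so $b$ lies in the span of $w_a$. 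If $b\notin w_a$, then $b$ lies strictly inside some child arc $(b_j,a_{j+1})$; since $a\in w_a$ lies in a block and hence not strictly inside that child arc, while $a<b<a_{j+1}$, we are forced into $a<b_j<b<a_{j+1}$, which is a crossing of $\alpha_x$ with the $T$-arc $(b_j,a_{j+1})$, contradicting the hypothesis. Hence $b\in w_a$, and by uniqueness $w_b=w_a=:w_0$.

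With the claim in hand the lemma follows quickly. Any $2$-simplex containing $[x,y]$ has the form $[x,y,w]$ with $w\in T$ and $w\cap\{a,b\}\neq\varnothing$, so $w\in\{w_a,w_b\}=\{w_0\}$; in particular $z=w_0$ and every such $w$ equals $z$. Since $\dim K(R)\le 2$, these are the only simplices containing $[x,y]$ as a $1$-face, so each is a face of $Y$, which is exactly the assertion that the pure edge of $Y$ is $Y$-exposed.

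I expect the main obstacle to be the key claim $w_a=w_b$, specifically the bookkeeping of the alternating block/child-arc structure of $w_a$: one must verify, according to the position of $b$ relative to the blocks of $w_a$, that $b\notin w_a$ always produces an honest crossing $a<b_j<b<a_{j+1}$ with a child arc (with the maximal arc $(a_1,b_k)$ handled separately to confine $b$ to the span, $a_1<a<b<b_k$). The remaining steps — the dimension bound, the identification $\Omega([x,y])=\{a,b\}$, and forcing the third vertex into $T$ — are routine consequences of the tree structure of a secondary structure's nerve and of the stated fact on pure and mixed edges.
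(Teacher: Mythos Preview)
Your proof is correct and follows the same route as the paper's: both argue that the two endpoints $a,b$ of $\gamma(Y)$, being $T$-unpaired by non-overlap, must lie in the same $T$-loop because $\gamma(Y)$ crosses no $T$-arc, whence that loop is the unique third vertex completing $[x,y]$ to a $2$-simplex. The only difference is stylistic: the paper compresses your block-by-block analysis of $w_a$ into the single observation that for every $T$-arc $z$ one has $b(z)<a<e(z)\Leftrightarrow b(z)<b<e(z)$, which immediately yields $w_a=w_b$.
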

\begin{proof}
W.l.o.g., let us assume $Y=[s_0,s_1,t_0]$ with pure arc $\gamma(Y)=\alpha_{s_0}$. Since $R$ is non-overlapping, $b(\gamma(Y))$ and $e(\gamma(Y))$ are unpaired nucleotides in the $T$ secondary structure. Hence, each of $b(\gamma(Y))$ and $e(\gamma(Y))$ are contained in exactly one loop in $T$. Furthermore, as $\gamma(Y)$ does not cross any arc in $T$, then for any arc $z\in T$ we must have that $[b(z)<b(\gamma(Y))<e(z)]\Leftrightarrow [b(z)<e(\gamma(Y))<e(z)]$. Therefore, $b(\gamma(Y))$ and $e(\gamma(Y))$ are contained in the same loop in $T$, namely, $t_0$. Since $t_0$ is the unique loop in $T$ that has nonempty mutual intersection with $s_0$ and $s_1$, $Y=[s_0,s_1,t_0]$ is the unique $2$-simplex in $K(R)$ that contains $[s_0,s_1]$ as an edge. Thus $[s_0,s_1]$ is $Y$-exposed and the lemma follows.
\end{proof}

\begin{lemma}\label{existenceofmixed}
Let $R=(S,T)$ be a bi-secondary structure. For any $3$-simplex $W$ in $K_3(R)$, there exists one mixed edge $Z\in K_1(R)$ that is $W$-exposed.
\end{lemma}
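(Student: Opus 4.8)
The plan is to first pin down the combinatorial type of a $3$-simplex, then reduce $W$-exposedness of a mixed edge to a purely positional statement about two arcs, and finally settle that statement by a short case analysis.

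First I would argue that $W$ necessarily contains exactly two loops from $S$ and two from $T$. In a single secondary structure each unpaired vertex lies in one loop and each paired vertex lies in exactly two loops (the two loops sharing its arc), so no three loops of $S$, or of $T$, have a common vertex; equivalently $K(S)$ and $K(T)$ are trees and carry no $2$-simplex. Hence $W=[s_0,s_1,t_0,t_1]$ with $s_0,s_1\in S$ and $t_0,t_1\in T$, its pure edges are $[s_0,s_1]$ and $[t_0,t_1]$, and its four remaining edges are mixed. Since $\Omega(W)\subseteq s_0\cap s_1$ and any vertex of $s_0\cap s_1$ is $S$-paired, the loops $s_0,s_1$ are adjacent in $\mathrm{Tr}(S)$, share a unique arc $\mu_S$, and $s_0\cap s_1=\{b(\mu_S),e(\mu_S)\}$; likewise there is $\mu_T$ with $t_0\cap t_1=\{b(\mu_T),e(\mu_T)\}$. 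I will label $s_1,t_1$ the loops whose maximal arc is $\mu_S,\mu_T$ (the \emph{inner} loops) and $s_0,t_0$ the others (the \emph{outer} loops). Because $\varnothing\neq\Omega(W)\subseteq\{b(\mu_S),e(\mu_S)\}\cap\{b(\mu_T),e(\mu_T)\}$, the arcs $\mu_S$ and $\mu_T$ share at least one endpoint and $\Omega(W)$ is exactly their set of shared endpoints. The decisive observation is that any shared endpoint $w$ is paired in both $S$ and $T$, so the loops containing $w$ are precisely $s_0,s_1,t_0,t_1$.

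Second, I would establish the reduction: a mixed edge $[s_i,t_j]$ is $W$-exposed whenever $s_i\cap t_j\subseteq\Omega(W)$. Indeed, for any simplex $U\supseteq[s_i,t_j]$ we have $\varnothing\neq\Omega(U)\subseteq s_i\cap t_j\subseteq\Omega(W)$, so $U$ contains a shared endpoint $w\in\Omega(U)$ lying in every loop of $U$; by the observation above every loop of $U$ belongs to $\{s_0,s_1,t_0,t_1\}$, i.e. $U$ is a face of $W$. Thus it suffices to exhibit one mixed edge whose two loops meet only in $\Omega(W)$. For this I would use the confinement forced by the maximal arcs: the inner loop $s_1$ lies inside the interval $I_S:=[b(\mu_S),e(\mu_S)]$, while the outer loop $s_0$ contains no vertex strictly between $b(\mu_S)$ and $e(\mu_S)$, and symmetrically for $t_1,t_0$ and $I_T:=[b(\mu_T),e(\mu_T)]$.

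Third, and this is the step I expect to be the main obstacle, I would finish by a case analysis on the relative position of the endpoint-sharing intervals $I_S,I_T$, keeping careful track of the confinement above. If $I_S\subseteq I_T$, then $s_1\subseteq I_S$ while $t_0$ omits every vertex strictly inside $\mu_T$, so $s_1\cap t_0$ can only contain endpoints of $\mu_T$ that also lie in $I_S$, which are exactly the shared endpoints, whence $s_1\cap t_0\subseteq\Omega(W)$ and $[s_1,t_0]$ works; symmetrically $[s_0,t_1]$ works when $I_T\subseteq I_S$. In the one remaining case neither interval contains the other, so they meet in a single shared endpoint $v$ where one arc ends and the other begins, and then $s_1\cap t_1\subseteq I_S\cap I_T=\{v\}\subseteq\Omega(W)$, so $[s_1,t_1]$ works. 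These three cases exhaust all ways two arcs sharing an endpoint can sit; in particular they cannot cross, since crossing arcs share no endpoint and would force $\Omega(W)=\varnothing$. Hence in every case one of the four mixed edges satisfies the criterion of the second step and is therefore $W$-exposed, which is the assertion. The subtlety to get right is precisely the bookkeeping that the inner loop is trapped within its maximal arc while the outer loop is expelled from it, so that the inner-versus-outer pairing collapses the intersection down to the shared endpoints.
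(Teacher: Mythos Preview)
Your proof is correct and follows essentially the same line as the paper's: both identify the two shared arcs $\mu_S,\mu_T$, observe that they must share an endpoint (else $\Omega(W)=\varnothing$), and then do a case analysis on their relative position to single out a mixed edge whose two loops intersect only in $\Omega(W)$, which forces $W$-exposedness via the ``three loops of one structure cannot meet'' principle. Your labeling is swapped relative to the paper (your inner loops $s_1,t_1$ are the paper's $s_0,t_0$ under its simplicial ordering), and you make the reduction criterion $s_i\cap t_j\subseteq\Omega(W)\Rightarrow[s_i,t_j]$ is $W$-exposed explicit, whereas the paper leaves it embedded in the case argument; otherwise the arguments coincide.
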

\begin{proof}
Let $W=[s_0,s_1,t_0,t_1]\in K_3(R)$, with $s_0\le  s_1\le t_0\le t_1$ (in terms of the simplicial ordering on $K(R)$). Since $s_0\cap s_1\cap t_0\cap t_1\neq \varnothing$, $\alpha_{s_0}$ and $\alpha_{t_0}$ must share at least one endpoint. W.l.o.g., we distinguish the following two cases (See Figure~\ref{existenceofmixedfigure}):\\
{\it Case $1$:} $b(\alpha_{s_0})<e(\alpha_{s_0})=b(\alpha_{t_0})<e(\alpha_{t_0})$.\\
In this case, we have $s_0\cap t_0=s_0\cap s_1\cap t_0\cap t_1=\{e(\alpha_{s_0})\}$. Suppose there exists another $2$-simplex (triangle) that contains the $1$-simplex (edge) $[s_0,t_0]$. Namely, suppose there exists $x\in R$, with $s_{0,1}\ne x\ne t_{0,1},$ and such that $s_0\cap t_0\cap x\neq \varnothing$. Then $$\varnothing\ne s_0\cap t_0\cap x=s_0\cap s_1\cap t_0\cap t_1\cap x\implies 
\begin{cases}
  s_0\cap s_1\cap x\ne \varnothing, x\in S \\
  t_0\cap t_1\cap x\ne \varnothing, x\in T
  \end{cases}.$$ 
Either case this yields a contradiction, since three loops of the same secondary structure intersect trivially (See Part Two: Loop Homology of Bi-secodary Structures).Thus, it must be the case that $Z=[s_0,t_0]$ is $W$-exposed.\\
\begin{figure}[htbp]
    \centering
    \includegraphics[width=0.5\textwidth]{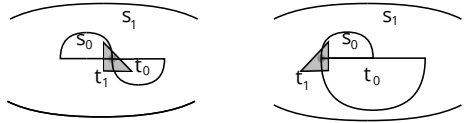}
    \caption{
      LHS: Case 1, $[s_0,t_0]$ is the mixed exposed $W$-edge. RHS: Case 2, $[s_0,t_1]$ is the mixed exposed $W$-edge.}
    \label{existenceofmixedfigure}
\end{figure}\\
{\it Case $2$:} $b(\alpha_{s_0})=b(\alpha_{t_0})<e(\alpha_{s_0})< e(\alpha_{t_0})$. \\
In this case, we have $s_0\cap t_1=s_0\cap s_1\cap t_0\cap t_1=\{b(\alpha_{s_0})\}$. By a similar argument as in Case $1$, we conclude that $Z=[s_0,t_1]$ is $W$-exposed. The arguments for the remaining cases can be obtained by symmetry from the ones above and have similar arguments. The lemma then follows.
\end{proof}

\begin{defn}
Let $X$ be a CC of a non-overlapping bi-secondary structure $R=(S,T)$. We call $$C(X)=\{\ Y_{\delta}\in K_2(R)^*|\gamma(Y)\in X\}$$ the $\it{closure}$ of $X$.
\end{defn}

\begin{lemma}\label{nocopy}
Let $X$ be a CC of a non-overlapping bi-secondary structure $R=(S,T)$. Then, for all $Y_p,Y'_q\in C(X)$ we have $Y=Y'\implies p=q$. I.e. the closure of a crossing component does not contain two copies of the same $2$-simplex.
\end{lemma}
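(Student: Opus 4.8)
The plan is to reduce the statement to a single assertion about decoration multiplicity: that every $Y\in K_2(R)$ with $\gamma(Y)\in X$ admits exactly one decoration, i.e. $\omega(Y)=1$. First I would unwind the hypotheses. If $Y_p,Y'_q\in C(X)$ satisfy $Y=Y'$, then since $\gamma$ depends only on the underlying $2$-simplex (Remark~\ref{purearc}), both decorations share the same pure arc $\gamma(Y)=\gamma(Y')=\alpha_x$, where $[x,y]$ is the pure edge of $Y$, $x\le y\le z$, and $z$ is the remaining (opposite-structure) vertex. By Remark~\ref{purearc} any admissible decoration index lies in $\{b(\alpha_x),e(\alpha_x)\}$, so $p,q\in\{b(\alpha_x),e(\alpha_x)\}$; moreover, as $x$ and $y$ are precisely the two loops sharing the arc $\alpha_x$, they meet only in its endpoints, whence $x\cap y=\{b(\alpha_x),e(\alpha_x)\}$ and $\Omega(Y)=\{b(\alpha_x),e(\alpha_x)\}\cap z$. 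Since $p,q\in\Omega(Y)$ and $\omega(Y)\ge 1$ always, it suffices to show $\omega(Y)=1$, i.e. that $b(\alpha_x)$ and $e(\alpha_x)$ do not both lie in the single loop $z$.

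Here I would invoke that $X$ is a \emph{non-trivial} crossing component. By definition of the arc line graph every vertex of $X$ is incident to an edge, and edges only join an $S$-arc to a $T$-arc; hence $\alpha_x$ crosses at least one arc $\alpha_w$ of the opposite secondary structure. Taking WLOG $\alpha_x\in S$, $z\in T$, and the crossing $T$-arc $\alpha_w$ oriented as $b(\alpha_x)<b(\alpha_w)<e(\alpha_x)<e(\alpha_w)$, I note that by non-overlappingness both $b(\alpha_x)$ and $e(\alpha_x)$ are unpaired in $T$. The crux is the loop-membership criterion already used in Lemma~\ref{exposededge}: two vertices unpaired in $T$ lie in the same $T$-loop if and only if they are enclosed by exactly the same set of $T$-arcs, i.e. for every $T$-arc $\mu$ one has $[b(\mu)<b(\alpha_x)<e(\mu)]\iff[b(\mu)<e(\alpha_x)<e(\mu)]$. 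Whereas in Lemma~\ref{exposededge} the absence of crossings forced this biconditional to hold, here the crossing arc $\mu=\alpha_w$ makes it fail: $b(\alpha_w)<e(\alpha_x)<e(\alpha_w)$ while $b(\alpha_x)<b(\alpha_w)$ shows $b(\alpha_w)<b(\alpha_x)$ is false. Therefore $b(\alpha_x)$ and $e(\alpha_x)$ lie in distinct $T$-loops, no single $z\in T$ contains both, and $\omega(Y)=1$; consequently $\Omega(Y)$ is a singleton and $p=q$.

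I expect the main obstacle to be the clean justification of the loop-membership criterion (that an unpaired vertex lies in the loop determined by its smallest enclosing arc, so two unpaired vertices coincide in a loop exactly when their chains of enclosing arcs agree) together with the bookkeeping of the symmetric cases: the pure edge lying in $T$ rather than $S$, and the opposite crossing orientation $b(\alpha_w)<b(\alpha_x)<e(\alpha_w)<e(\alpha_x)$. Each of these is dispatched by the same biconditional argument with the roles of $S$ and $T$ (and of $b$ and $e$) interchanged, so that all of the combinatorial content is concentrated in the single crossing arc $\alpha_w$ witnessing that the two endpoints of $\alpha_x$ are separated by $T$.
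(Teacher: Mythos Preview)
Your proposal is correct and follows the same outline as the paper: reduce to showing $\omega(Y)=1$ whenever $\gamma(Y)\in X$, so that $\Omega(Y)$ is a singleton and $p=q$ is forced. The paper collapses this step into a single sentence (``Since $R$ is non-overlapping we must have that $\omega(Y)=1$''), whereas you actually supply the argument: $\gamma(Y)=\alpha_x\in X$ guarantees a crossing opposite-structure arc $\alpha_w$, and together with non-overlapping (so that $b(\alpha_x),e(\alpha_x)$ are unpaired on the $T$-side) this separates the two endpoints into distinct $T$-loops, whence $z$ can contain at most one of them.

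It is worth noting that your extra work is not gratuitous. Non-overlapping \emph{alone} does not imply $\omega(Y)=1$: Lemma~\ref{exposededge} (and later Lemma~\ref{singletriangle}) show that when $\gamma(Y)$ is non-crossing both endpoints lie in the same $T$-loop, giving $\omega(Y)=2$. So the crossing hypothesis $\gamma(Y)\in X$ is genuinely needed, and your argument makes explicit exactly the mechanism the paper leaves implicit. Your identification of this as the converse of the situation in Lemma~\ref{exposededge} is apt.
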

\begin{proof}
Let $ Y_p,Y_q\in C(X)$ with $Y=[x,y,z]\in K_2(R)$ and where w.l.o.g. we can assume that $[x,y]$ is the pure edge of $Y$. Since $R$ is non-overlapping we must have that $\omega(Y)=1$. This means that $[\{b(\alpha_x)\}= \Omega(Y)]\vee[\{e(\alpha_x)\}=\Omega(Y)]$. Thus $[p=b(\alpha_x)=q\in \Omega(Y)]\vee[p=e(\alpha_x)=q\in\Omega(Y)]$. In either case the lemma follows.
\end{proof}

\begin{lemma}\label{disjoint}
Let $X,X'$ be two distinct CCs of the non-overlapping bi-secondary structure $R=(S,T)$. Then $C(X)\cap C(X')=\varnothing$.
\end{lemma}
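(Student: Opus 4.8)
The plan is to argue by contradiction, leveraging that the closure $C(X)$ is defined entirely through the pure-arc map $\gamma$ of Remark~\ref{purearc}, together with the elementary fact that the crossing components, being the connected components of the line graph $G=(R,E)$, partition the arc set of $R$.

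First I would recall from Remark~\ref{purearc} that to every decoration $Y_\delta\in K_2(R)^*$ there is associated a single, well-defined arc $\gamma(Y)$, the pure arc, depending only on the underlying $2$-simplex $Y=[x,y,z]$ (one has $\gamma(Y)=\alpha_x$, where $[x,y]$ is the pure edge), and not on the decoration index $\delta$. Hence, by the definition of closure, membership $Y_\delta\in C(X)$ is equivalent to the single condition $\gamma(Y)\in X$. Next I would note that $X$ and $X'$ are the arc sets of two \emph{distinct} connected components of $G$; since the components of any graph partition its vertex set, distinct components are disjoint, so that $X\cap X'=\varnothing$ as sets of arcs. Now, were some decoration $Y_\delta$ to lie in $C(X)\cap C(X')$, the reformulation above would force $\gamma(Y)\in X$ and $\gamma(Y)\in X'$ simultaneously, i.e.\ $\gamma(Y)\in X\cap X'=\varnothing$, which is absurd; therefore $C(X)\cap C(X')=\varnothing$.

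The main obstacle here is bookkeeping rather than geometry: one must be careful to compare the two closures as sets of \emph{decorations} (indexed $2$-simplices) and to confirm that $\gamma$ is genuinely a function of $Y$ alone, so that the membership test collapses to the single arc $\gamma(Y)$. Once Remark~\ref{purearc} secures the well-definedness of $\gamma$, the disjointness of distinct connected components carries the argument to its conclusion.
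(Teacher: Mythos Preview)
Your proof is correct and follows essentially the same route as the paper: assume a shared decoration, observe that its pure arc would lie in both $X$ and $X'$, and derive a contradiction from the disjointness of distinct connected components. Your version is in fact slightly cleaner, since by leaning on Remark~\ref{purearc} to fix $\gamma(Y)$ as a single well-defined arc of $Y$ you bypass the paper's extra case (two putatively different pure arcs sharing the endpoint $p$, ruled out there via the non-overlapping hypothesis), which becomes vacuous once $\gamma$ is recognized as a function of $Y$ alone.
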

\begin{proof}
Suppose $Y_p\in C(X)\cap C(X')$. Then there exists $\gamma(Y)\in X,\gamma'(Y)\in X'$ with $p=b(\gamma(Y))$ or $p=e(\gamma(Y))$ and similarly $p=b(\gamma'(Y))$ or $p=e(\gamma'(Y))$. Combining either of the cases would imply either:\\
The two arcs $\gamma(Y)$ and $\gamma'(Y)$ share $p$ as an endpoint - a contradiction, since by hypothesis, $R$ is non-overlapping and hence has no nucleotides of degree four in its arc diagram.\\
Or: it would imply the fact that $\gamma(Y)=\gamma'(Y)$. But then $X\cap X'\ne\varnothing$. By defintion of CCs we must then conclude that $X=X'$ - another contradiction to the hypothesis. Thus it must be that $C(X)\cap C(X')=\varnothing$, and so the lemma follows.
\end{proof}

\section{Closures and Spheres}

\begin{lemma}\label{glue}
Let $R=(S,T)$ be a non-overlapping bi-secondary structure and let $C(X)$ be the closure of a CC $X$ of $R$. Then, for any $Y_p\in C(X)$ and any $1$-face $[u,v]$ of $Y$, there exists $Y'_q\in C(X)$, $Y_p\ne Y'_q$, with $Y_p\cap Y'_q=[u,v]$. I.e. any decoration (triangle) in $C(X)$ is glued along all of its $1$-faces (edges) to decorations still in $C(X)$. Furthermore, the only decorations in $C(X)$ that have as a face the edge $[u,v]$ are $Y_p$ and $Y'_q$.
\end{lemma}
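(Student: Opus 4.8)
\section*{Proof proposal}

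The plan is to recast the entire statement as a vertex count. First I would record a clean description of the triangles of $C(X)$. Let $V(X)$ denote the set of endpoints of the arcs of $X$. Fix $q\in V(X)$; since $R$ is non-overlapping, $q$ is paired by a unique arc $A$, and the loops containing $q$ are exactly the two loops $\mathrm{in}(A),\mathrm{out}(A)$ of $A$'s own structure together with the unique loop $z_q$ of the opposite structure. Hence $T_q:=[\mathrm{in}(A),\mathrm{out}(A),z_q]$ is a $2$-simplex with pure arc $A\in X$ and decoration index $q$. Conversely, by Remark~\ref{purearc} every decoration in $C(X)$ has its index at an endpoint of its ($X$-)pure arc, and the triangle is then forced to be of the form $T_q$; so the assignment $q\mapsto T_q$ is a bijection between $V(X)$ and the decorations of $C(X)$ (consistent with Lemma~\ref{nocopy}), whose three vertices are precisely the three loops through $q$.

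The payoff is a master formula: for loops $\ell\ne\ell'$, the decorations of $C(X)$ having $[\ell,\ell']$ as a face are exactly the $T_q$ with $q\in V(X)\cap\ell\cap\ell'$, since $\ell,\ell'$ are both vertices of $T_q$ iff both contain $q$. As two distinct triangles sharing an edge meet exactly in that edge, the whole lemma becomes equivalent to the single assertion
\[
  |V(X)\cap \ell\cap\ell'|=2 \quad\text{for every edge } [\ell,\ell'] \text{ of a triangle of } C(X).
\]
Normalizing $Y=[s_0,s_1,t_0]$ with $\mu=\alpha_{s_0}=\gamma(Y)\in X$ and (by the $b\leftrightarrow e$ symmetry) decoration index $e(\mu)$, the pure edge is then immediate: $s_0\cap s_1=\{b(\mu),e(\mu)\}$ and both endpoints of $\mu$ lie in $V(X)$, so the count is exactly $2$.

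For a mixed edge $[s,t]$ with $s\in\{s_0,s_1\}$ and $t=t_0$ I would analyze $V(X)\cap(s\cap t)$ geometrically. The set $s\cap t$ is a disjoint union of backbone intervals; its interior vertices lie strictly inside a block of $s$ and of $t$, hence are unpaired in both structures and contribute nothing, so $V(X)\cap(s\cap t)$ sits among the interval endpoints, each of which is an endpoint of a unique arc. One of these endpoints is $e(\mu)$, carrying $\mu\in X$; and because $\mu\in X$ crosses a $T$-arc, $b(\mu)$ and $e(\mu)$ lie in distinct $T$-loops (the converse of the computation in Lemma~\ref{exposededge}), so $b(\mu)\notin t_0$ and $e(\mu)$ is an extreme endpoint of $s\cap t$. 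The task is then to prove that exactly one \emph{further} endpoint of $s\cap t$ carries an arc of $X$; I expect it to be the remaining extreme endpoint, with its arc joined to $\mu$ in the (bipartite) line graph $G$.

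The main obstacle is precisely this existence-with-uniqueness for the mixed edges. The subtlety I would stress is that whether an incident arc $A_v$ lies in $X$ is a \emph{global} connectivity condition, not merely a local crossing condition: an inner endpoint $v$ (one bordering a gap of $s\cap t$) may well carry an arc that crosses something, yet that arc can belong to a \emph{different} crossing component and must be excluded. The two structural facts I would exploit are that $S$ and $T$ are each crossing-free, so $G$ is bipartite and the arcs of a fixed structure that cross $\mu$ are mutually nested, and that the $T$-arc of $X$ separating $b(\mu)$ from $e(\mu)$ bounds the block of $t_0$ containing $e(\mu)$. Using the nesting forced by the crossing-free condition, I would argue that any arc with an endpoint strictly interior to the convex hull of $s\cap t$ is either uncrossed or trapped in a component separated from $\mu$, leaving exactly the two extreme endpoints in $V(X)$; checking that no such ``inner'' incident arc reconnects to $X$ while the far extreme always does is the heart of the argument. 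Once this nesting/connectivity bookkeeping is in place, the master formula delivers the count $2$ on every edge, and with it the gluing statement, including that the two decorations meet exactly along $[u,v]$.
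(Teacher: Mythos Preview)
Your master formula is a genuinely nice reformulation, and it is correct: the decorations of $C(X)$ containing a given edge $[\ell,\ell']$ are exactly the $T_q$ with $q\in V(X)\cap\ell\cap\ell'$, so the lemma is equivalent to $|V(X)\cap\ell\cap\ell'|=2$ for every edge of every triangle in $C(X)$. The pure--edge case is also fine. The paper proceeds differently: it puts the cyclic \emph{band order} on $N(X)=V(X)$ and argues that the two decorations sharing a mixed edge are \emph{consecutive} in that order, then proves uniqueness by a path--in--$X$ contradiction. Your counting formulation is cleaner to state; the paper's band order is what actually locates the second witness.

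Where your plan breaks is the mixed--edge step. You assert that $e(\mu)$ is an extreme endpoint of $s\cap t$ and that the second $V(X)$--point is the other extreme endpoint, with all interior endpoints carrying arcs outside $X$. This is false for the edge $[s_1,t_0]$. Take $S=\{(3,10)\}$ and $T=\{(1,20),(2,5)\}$ (plus rainbows). Then $\mu=(3,10)$ crosses $(2,5)$ but not $(1,20)$, so $X=\{(3,10),(2,5)\}$ and $V(X)=\{2,3,5,10\}$. At $q=e(\mu)=10$ the triangle is $[s_0,s_1,t_0]$ with $t_0$ the $T$--loop of $(1,20)$. One computes
\[
 s_1\cap t_0=\{1,2\}\cup\{10,11,\ldots,20\},
\]
whose extreme endpoints $1$ and $20$ are the endpoints of $(1,20)\notin X$, while $V(X)\cap(s_1\cap t_0)=\{2,10\}$ consists of two \emph{inner} endpoints. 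So your claim that ``arcs with an endpoint strictly interior to the convex hull of $s\cap t$ are uncrossed or trapped in a different component'' fails: $\mu$ itself has $e(\mu)=10$ strictly interior, and so does $(2,5)\in X$. Note also that your justification ``$b(\mu)\notin t_0$ hence $e(\mu)$ is extreme'' only works for $s=s_0$ (where $s_0\subseteq[b(\mu),e(\mu)]$), not for $s=s_1$.

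Your counting identity is still true here --- $|V(X)\cap s_1\cap t_0|=2$ --- so the framework survives; what needs replacing is the geometric heuristic. The correct statement, and what the paper exploits, is that the two $V(X)$--points in $s\cap t$ are the nucleotides of $V(X)$ \emph{adjacent to $q$ in the cyclic order on $V(X)$} (one on each side), not the min and max of $s\cap t$. If you want to finish along your lines, prove directly that for a mixed edge $[s,t]$ the set $V(X)\cap s\cap t$ is exactly $\{q,q'\}$ where $q'$ is the band--order neighbour of $q$ on the appropriate side; the nesting/connectivity bookkeeping you describe then shows that any third $V(X)$--point between them would force an $X$--arc to separate $q$ from $q'$ inside $s\cap t$, contradicting adjacency.
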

\begin{proof}
Let $N(X)=\{\delta|Y_{\delta}\in C(X)\}$ be the set of nucleotides that index the decorations in the closure of the CC $X$ of $R$. We can introduce a cyclical ordering on $N(X)$ by letting $p\in N(X)$ precede $q\in N(X)$ if $q$ is the smallest nucleotide such that $p<q$. Furthermore we set $\max[N(X)]$ to precede $\min[N(X)]$. This cyclical order induces a cyclical order on $C(X)$ where $Y_p\in C(X)$ precedes $Y'_q\in C(X)$ if $p$ precedes $q$ in $N(X)$. By virtue of Lemma~\ref{nocopy} this order is well defined. We call this order the $\it{band}$ order of $C(X)$.

Now, w.l.o.g. let $Y_p=[x,y,z]_p\in C(X)$ be a decoration at $p$ with $\gamma(Y)\in X$ the pure arc of $Y_p$ and let the pure edge of $Y$ be $[x,y]$. For each edge of the decoration $Y_p$ we would like to identify another decoration $Y'_q\in C(X)$ that shares that edge with $Y_p$.
Firstly, clearly $Y_p$ shares the pure edge $[x,y]$ with the decoration $Y'_{e(\gamma(Y))}\in C(X)$. This is since $\gamma(Y)=\alpha_x$ and so $p=b(\gamma(Y))=b(\alpha_x)\implies q=e(\alpha_x)=e(\gamma(Y))$ (See Figure~\ref{pureedgegluefigure}). Since $[x,y]$ is the pure edge of $Y$, it can only appear as a $1$-simplex in $Y_p$ and in $Y'_{e(\gamma(Y))}$, also as its pure edge. Hence they are the only two decorations in $C(X)$ that contain $[x,y]$ as a face.

\begin{figure}[htbp]
    \centering
    \includegraphics[width=0.25\textwidth]{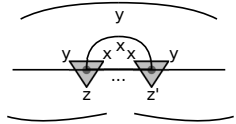}
    \caption{
      The decoration $Y_p=[x,y,z]_p$ with $p=b(\alpha_x)$, is glued along its pure edge $[x,y]$ to the decoration $Y'_q=[x,y,z']_q$ with $q=e(\alpha_x)$. Note that in this case $\gamma(Y)=\gamma(Y')=\alpha_x$.}
    \label{pureedgegluefigure}
\end{figure}

Consider now a mixed edge of $Y_p$. We claim that this edge is present in the decoration $Y'_q$ that: is the predecessor OR that precedes $Y_p$ in the the band order of $C(x)$. Suppose our chosen edge is $[x,z]\subseteq Y_p$ and let $Y'_q$ succeed $Y_p$ in the band order. Note that, by definition, we must then have that $q$ is the closest (minimal) nucleotide to $p$ (w.r.t. the cyclic ordering on $N(X)$). To show that $[x,z]\subseteq Y'_q$ it suffices to note that if $r$ would be a nucleotide at which we would have a decoration $Y''_r$, and said nucleotide would be in between $p$ and $q$ then, we must have $\forall Y''_r\in K_2(R)^*\implies Y''_r\not\in C(X)$. Otherwise $r$ would violate the minimality of $q$ (See Figure ~\ref{mixededgegluefigure}).

\begin{figure}[htbp]
    \centering
    \includegraphics[width=0.25\textwidth]{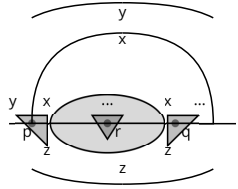}
    \caption{
      The decoration $Y_p=[x,y,z]_p$ with $p=b(\alpha_x)$, is glued along its mixed edge $[x,z]$ to the decoration $Y'_q=[x,y,z']_q$. By minimality of $Y'_q$ we must have that for any decoration $Y''_r$ with $p\le r\le q$, $Y''_r\not\in C(x)$.}
    \label{mixededgegluefigure}
\end{figure}

Hence we must have $[x,z]\subseteq Y'_q$. Now, to show that $Y'_q$ is the only other decoration in $C(X)$ that contains the face $[x,z]$ we argue as follows:

Suppose there exists another decoration $Y''_r\in C(X)$, $r\ne q$ that also contains the face $[x,z]$. Then by lemma~\ref{nocopy} we must have $Y''\ne Y'$ and so $\gamma(Y')\ne \gamma(Y'')$. Since $q$ is minimal, we must have $b(\alpha_x)\le b(\gamma(Y'))\le e(\gamma(Y'))\le b(\gamma(Y''))\le e(\gamma(Y''))\le e(\alpha_x)$ (See Figure~\ref{mixededgeuniquefigure}). 

\begin{figure}[htbp]
    \centering
    \includegraphics[width=0.25\textwidth]{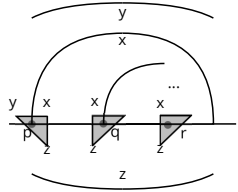}
    \caption{
      The decoration $Y_p=[x,y,z]_p$ with $p=b(\alpha_x)$, is glued along its mixed edge $[x,z]$ to the decoration $Y'_q=[x,y,z']_q$. By minimality of $Y'_q$ we must have that for any decoration $Y''_r\in C(x)$ with $[x,z]\subseteq Y''_r$, $b(\alpha_x)\le b(\gamma(Y'))\le e(\gamma(Y'))\le b(\gamma(Y''))\le e(\gamma(Y''))\le e(\alpha_x)$.}
    \label{mixededgeuniquefigure}
\end{figure}

Since $\gamma(Y')\in X$, there must exists a sequence of pairwise crossing arcs that terminates with $\alpha_x$, i.e. a path between $\gamma(Y')$ and $\alpha_x$ in the $X$-vertex induced arc line sub-graph of $R$. Note that for such arcs $w$ in this sequence we cannot have $b(w)\le b(x)\le e(w)$ otherwise the edge $[x,z]\subseteq Y_p$ would have to contain $w$ in its labeling. Hence this sequence of arcs must connect $\gamma(Y')$ to $\alpha_x$ through an arc $w'$ such that $b(w')\le e(x)\le e(w')$. However, since $e(\gamma(Y'))\le b(\gamma(Y''))\le e(\gamma(Y''))\le e(\alpha_x)$ then, either $b(w'')\le b(\gamma(Y''))\le e(\gamma(Y''))\le e(w'')$ for some $w''$ in the sequence, or at the very least $b(w'')\le b(\gamma(Y''))\le e(w'')$. In either case however, the label of the edge $[x,z]\subseteq Y''_r$ would have to contain $w''$. Since $[x,z]$ is fixed, so is its labeling, and hence a contradiction arises. This show that there does not exists another decoration $Y''_r\in C(x)$ with $[x,z]$ as a face. 

A similar argument holds for the edge $[x,z]\subseteq Y_p$, and thus the lemma follows. 
\end{proof}

\begin{lemma}\label{issphere}
Let $R=(S,T)$ be a non-overlapping bi-secondary structure and let $C(X)$ be the closure of a CC $X$ of $R$. There exists a Euclidean $3$-space embedding of $C(X)$ that is homeomorphic to a $2$-sphere.
\end{lemma}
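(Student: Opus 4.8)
The plan is to read off from Lemma~\ref{glue} that $C(X)$ is a closed triangulated surface, and then to pin down its homeomorphism type via the classification of compact surfaces. Lemma~\ref{glue} says that every $1$-face of every decoration in $C(X)$ is shared by exactly two decorations of $C(X)$; so, viewing $C(X)$ as the finite two-dimensional complex whose triangles are its decorations, every edge lies in exactly two triangles and none lies on a boundary. Once I also know $C(X)$ is connected and is a genuine $2$-manifold, the classification theorem reduces the problem to computing orientability and the Euler characteristic and then exhibiting the embedding: any closed surface with Euler characteristic $2$ is $S^2$, and $S^2$ embeds in $\mathbb{R}^3$, so the statement would follow.

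First I would use the \emph{band order} of Lemma~\ref{glue} to organize $C(X)$ geometrically. Consecutive decorations in the band order meet along mixed edges, so the mixed edges alone string all $2|X|$ decorations into a single cyclic strip; in particular the dual graph of $C(X)$ contains a Hamiltonian cycle and $C(X)$ is connected. I would realize this strip as a triangulated annulus $A$, whose interior edges are the mixed edges and whose two boundary circles are made of the pure edges (each triangle contributes one free pure edge; recall every $2$-simplex has one pure and two mixed $1$-faces). The strip closes into an annulus rather than a M\"obius band because $S$ lies in the upper and $T$ in the lower half-plane, so the $S$-side of each triangle stays coherently on one side of the band; this orientation-coherence also yields orientability of the final surface. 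The pure-pair gluings of Lemma~\ref{glue}, identifying the free pure edge of $Y_{b(\alpha)}$ with that of $Y_{e(\alpha)}$ for each $\alpha\in X$, then glue the boundary edges of $A$ in pairs. Since we are gluing the boundary edges of a compact surface-with-boundary in pairs, each to exactly one other (again Lemma~\ref{glue}), the quotient $C(X)$ is automatically a closed $2$-manifold, so no separate vertex-link check is needed.

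Next I would compute $V-E+F$. Because $R$ is non-overlapping, every endpoint of an arc of $X$ is paired in one structure and hence unpaired in the other, so it lies in a unique loop of the opposite structure; combined with Lemma~\ref{nocopy} this makes the index map a bijection between $C(X)$ and the endpoints of the arcs of $X$, giving $F=2|X|$. Each decoration has two mixed and one pure $1$-face, so there are $2|X|$ mixed edges and, after the pure-pair identifications, $|X|$ pure edges, whence $E=3|X|$ and $V-E+F = V-|X|$. The sphere then follows precisely from the claim $V=|X|+2$. The base case $|X|=2$ is the two-arc crossing component, whose closure consists of all four triangles on its four incident loops, i.e.\ the boundary of a tetrahedron $\cong S^2$, consistent with $V=4$, $E=6$, $F=4$.

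The hard part will be establishing $V=|X|+2$, equivalently that the surface has genus $0$. This is not automatic: the arcs of a crossing component cross by definition, so the matching by which the pure pairs glue the boundary of $A$ is a \emph{crossing} chord diagram, and a crossing matching on a band can in principle raise the genus. The content is to show that the matching coming from the line graph of $X$ always yields genus $0$. I expect the cleanest route is induction on $|X|$: choose an arc $\alpha$ that is a leaf of a spanning tree of the line graph of $X$, show that deleting $\alpha$ realizes $C(X\setminus\{\alpha\})$ as the result of an elementary, sphere-preserving surgery on $C(X)$ (collapsing the pure pair of $\alpha$ together with its two flanking mixed edges), and invoke the induction hypothesis with the tetrahedral base case. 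An alternative is to prove $V=|X|+2$ directly by a counting argument matching the loops that occur as vertices against the connected line graph of $X$. Verifying that peeling a leaf arc is genuinely genus-preserving is the step I anticipate will require the most care.
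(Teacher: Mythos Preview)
Your setup through the annular band $A$ matches the paper's exactly, but from that point on the two arguments diverge. You opt for an Euler-characteristic computation and reduce everything to the unproved identity $V=|X|+2$, which you propose to establish by induction on $|X|$ via a leaf-removal surgery. The paper instead finishes in one stroke with a planarity observation: the pure-edge identifications on the two boundary circles of $A$ are \emph{literally} the arcs of $X$ themselves---the $S$-arcs of $X$ attach along one boundary of the annulus and the $T$-arcs along the other. Because $S$ and $T$ are each secondary structures, neither family has internal crossings, so the entire identification pattern of $\partial A$ can be drawn in $\mathbb{R}^2\setminus A$ without crossings. A closed surface obtained from a planar polygon-with-identifications is a sphere, and the lemma follows. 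You actually brush against this idea when you argue the band is an annulus rather than a M\"obius strip (``the $S$-side of each triangle stays coherently on one side''), but you use it only for orientability and then abandon it in favor of the harder Euler-characteristic route.

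Your inductive plan is plausible but would need real work you have not supplied. Deleting an arc $\alpha$ from $R$ does more than remove the two decorations indexed by $b(\alpha),e(\alpha)$: it merges the loop $s_0$ with $\alpha_{s_0}=\alpha$ into its parent loop $s_1$, so several vertices and edges of $C(X)$ get identified, and the decorations of the remaining arcs at nucleotides near $\alpha$ change labels. Verifying that this really implements a genus-preserving collapse on $C(X)$---and that $X\setminus\{\alpha\}$ is still the relevant CC in the modified structure---is precisely the ``care'' you flag but do not carry out. The paper's planarity argument sidesteps all of this and uses the bi-secondary hypothesis in the most direct possible way.
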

\begin{proof}

By Lemma~\ref{glue} and Lemma~\ref{nocopy} we can conclude that there exists a Euclidean $3$-space embedding of $C(X)$ that is a closed surface. It suffices to show that this surface is a sphere. To this end we argue as follows: Let $P$ be the triangulated annular region obtained by the pairwise consecutive gluing of the decorations in $C(X)$ following the band order, only along edges that are mixed (See Figure~\ref{annulargluefigure}).

\begin{figure}[htbp]
    \centering
    \includegraphics[width=0.75\textwidth]{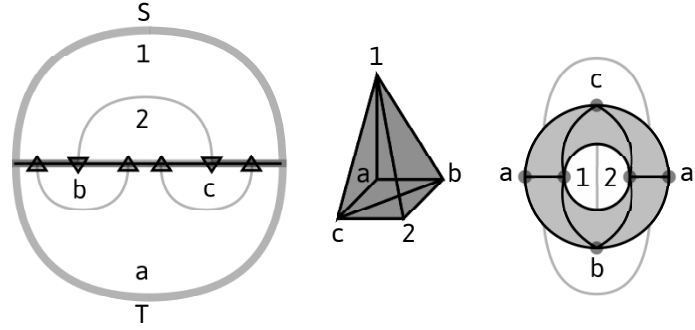}
    \caption{LHS: a bi-secondary structure with one CC, $X=\{\alpha_b,\alpha_c,\alpha_2\}$, and the CC's closure in terms of corresponding decorations. MS: The closure as a triangulation of a $2$-sphere in $K(R)$. RHS: the triangulation of the annular region $P$ with the gluing arcs corresponding to the arcs in the $CC$.}
    \label{annulargluefigure}
\end{figure}

We draw a "gluing" arc between two pure edges in $P$ if they are to be glued. It suffices to show that these arcs can be embedded in $\mathbb{R}^2\setminus P$ without crossing. The $\mathbb{R}^2\setminus P$ embedding is given by the fact that, as mentioned in the proof of Lemma~\ref{glue}, pure edges of a decoration at the endpoint of a given gluing arc will be glued to pure edges of a decoration at the other endpoint of the gluing arc. Hence, the "gluing" arcs are actually the pure arcs themselves. Furthermore the pure-arcs corresponding to the inside boundary of $P$ will be arcs from the secondary structure $S$ while those corresponding to the outside boundary of $P$ correspond to arcs in $T$. Since $R=(S,T)$ is a bi-secondary the pure arcs will thus have a planar embedding into $\mathbb{R}^2\setminus P$ by virtue of the planarity of $R=(S,T)$. Hence the lemma follows.
\end{proof}

\begin{rmk}
Lemma~\ref{issphere} and Lemma~\ref{disjoint} allow us to immediately conclude that $$|\chi(R)|\le r(H_2(R))$$ in the case where $R=(S,T)$ is a non-overlapping bi-secondary structure. This prompts the natural question as to whether or not we actually have strict equality in the above relation. As we shall see in the following, that will indeed be the case.
\end{rmk}

\section{The Tree of Irreducible Components}

\begin{defn}
We call a (potentially trivial) connected component of the line graph of $R$, an \emph{irreducible component} (IC) of $R$.   
\end{defn}

\begin{rmk}
By definition, an IC is either a non-crossing arc in $R$, or a CC in $R$. Hence, any bi-secondary structure $R$ can be uniquely decomposed into disjoint ICs.
\end{rmk}

\begin{defn}
Let $R = (S,T)$ be a non-overlapping bi-secondary structure. Let $X$ and $X'$ be two distinct ICs of $R$. Then we say $X$ is \emph{nested by} $X'$ which we denote by $X\ll X'$, if and only if there exists an arc $\epsilon'\in X'$, such that for all $\epsilon\in X$, we have $\epsilon\prec_S \epsilon'$ or $\epsilon\prec_T \epsilon'$.
\end{defn}

\begin{rmk}\label{ICpartition}
Clearly, the $\ll$ relation defines a poset structure on the the set of ICs of $R$. As a result, a bi-secondary structure can be constructed from ICs via nesting and concatenation. Hence, each IC has a unique cover (parent) w.r.t the $\ll$ poset order.
\end{rmk}

Below, we extend in a natural fashion, the definition of the closure of a CC to that of the closure of an IC. 

\begin{defn}
Let $X$ be an IC of a non-overlapping bi-secondary structure $R=(S,T)$. We call
$$
C(X)=\{Y_{\delta}\in K_2(R)^*|\gamma(Y)\in X\}
$$
the closure of $X$.
\end{defn}

The $\ll$ poset order induces a tree-like structure over all the sub-simplicial complexes generated by the closures of the ICs. Let $\langle C(X)\rangle$ denote the sub-simplicial complex of $K(R)$ generated by $\{Y|\gamma(Y)\in X\}$. Lemma~\ref{issphere} shows that when $X$ is a CC, $\langle C(X)\rangle$ is homeomorphic to a $2$-sphere. We first show that when $X$ is a trivial IC, i.e., $X$ contains only $1$ arc, $\langle C(X)\rangle$ is a single $2$-simplex (triangle).

\begin{lemma}\label{singletriangle}
Let $X$ be a trivial IC of a non-overlapping bi-secondary structure $R=(S,T)$. Then $\langle C(X)\rangle$ is a $2$-simplex.
\end{lemma}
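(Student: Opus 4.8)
The plan is to show that there is a \emph{unique} $2$-simplex $Y\in K_2(R)$ with $\gamma(Y)\in X$; since $\langle C(X)\rangle$ is by definition the subcomplex generated by $\{Y\mid\gamma(Y)\in X\}$, it will then consist of this single triangle together with its faces, which is exactly the assertion. Write $X=\{\alpha\}$ and, w.l.o.g., assume $\alpha\in S$ is a non-rainbow arc; by the loop--arc bijection we may write $\alpha=\alpha_{s_0}$ for a unique loop $s_0\in S$. Because $X$ is a trivial IC, $\alpha$ crosses no arc of $R$.

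First I would pin down the pure edge of any admissible $Y$. By Remark~\ref{purearc}, writing $Y=[x,y,z]$ with pure edge $[x,y]$ and $x\le y\le z$, we have $\gamma(Y)=\alpha_x$; hence $\gamma(Y)=\alpha$ forces $x=s_0$, so $y\in S$ and the mixed vertex $z\in T$. The same remark gives $\Omega(Y)\subseteq\{b(\alpha),e(\alpha)\}$, and since $\Omega(Y)\ne\varnothing$, the loop $y\ (\ne s_0)$ must contain $b(\alpha)$ or $e(\alpha)$. Using the remark that each non-rainbow arc lies in exactly two loops of its structure, together with the fact that the child arcs of $s_0$ meet $s_0$ only at interior vertices of $[b(\alpha),e(\alpha)]$, the only loop of $S$ that shares an endpoint of $\alpha$ with $s_0$ is its parent loop $s_1$. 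Thus $y=s_1$ and $\Omega([s_0,s_1])=\{b(\alpha),e(\alpha)\}$.

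Next I would force the third vertex and conclude. Since $R$ is non-overlapping, $b(\alpha)$ and $e(\alpha)$ are paired in $S$ and therefore unpaired in $T$, so each lies in exactly one loop of $T$; since $\alpha$ crosses no arc of $T$, the argument of Lemma~\ref{exposededge} shows both endpoints lie in the same loop $t_0\in T$. Hence the only $z\in T$ meeting $\Omega([s_0,s_1])$ is $t_0$, and the unique candidate is $Y=[s_0,s_1,t_0]$. Because $\alpha=\gamma(Y)$ lies in no CC, Lemma~\ref{exposededge} also gives that the pure edge $[s_0,s_1]$ is $Y$-exposed, so any $2$-simplex carrying this pure edge is a face of $Y$ and hence equals $Y$. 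Therefore $\{Y\mid\gamma(Y)\in X\}=\{[s_0,s_1,t_0]\}$, and $\langle C(X)\rangle$ is a single $2$-simplex.

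I expect the main obstacle to be the second step: rigorously excluding every loop $y\ne s_1$ as the second vertex of the pure edge, i.e.\ showing that among all loops of $S$ meeting $s_0$, only the parent shares an actual endpoint of $\alpha$ (the children contribute their own maximal arcs as pure arcs, not $\alpha$). Some care is also needed with the rainbow arc, which has no parent loop and must either be excluded from the statement or handled as a degenerate case.
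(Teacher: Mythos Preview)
Your proof is correct and follows essentially the same route as the paper's: identify the parent loop $s_1$ of $s_0$ in $S$ and the unique $T$-loop $t_0$ containing both endpoints of $\alpha$ (since $\alpha$ crosses nothing in $T$), and conclude that the only $2$-simplex with pure arc $\alpha$ is $[s_0,s_1,t_0]$. The paper simply names the covers $\epsilon$ and $\beta$ of $\mu$ in $(S,\prec_S)$ and $(T,\prec_T)$ and observes that the decorations at $b(\mu)$ and $e(\mu)$ arise from the same underlying triangle; you unpack the same facts more carefully, explicitly invoking Remark~\ref{purearc} and the argument of Lemma~\ref{exposededge}, which is fine (and arguably clearer). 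Your caveat about the rainbow arc is legitimate but applies equally to the paper's proof; note, however, that no $Y\in K_2(R)$ can have a rainbow as its pure arc $\gamma(Y)$ (the rainbow has no parent loop, hence no pure edge with it as the smaller vertex), so the degenerate case yields an empty closure rather than a counterexample.
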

\begin{proof}
W.l.o.g, we can assume $X=\{\mu\}$, where $\mu\in S$. Let $\epsilon$ be the cover of $\mu$ w.r.t. $\prec_S$. Let $\beta$ be the cover of $\mu$ w.r.t. $\prec_T$ (when $\mu$ is flipped to the $T$ side of the diagram). Since $\mu\in S$ does not cross an arc in $T$ we must w.l.o.g. have $b(\beta)\le b(\mu)\le e(\mu)\le e(\beta)$. Let $Y=[s_0,s_1,t]$ with $\alpha_{s_0}=\mu,\alpha_{s_1}=\epsilon,\alpha_t=\beta$. Then $Y_{b(\mu)}=Y_{e(\mu)}$ and so we must have $C(X)=\{Y_{b(\mu)},Y_{e(\mu)}\}=\{Y\}$ (See Figure~\ref{singletrianglefigure}).

\begin{figure}[htbp]
    \centering
    \includegraphics[width=0.25\textwidth]{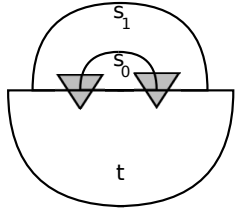}
    \caption{Here $\mu=\alpha_{s_0}$. The decorations at $b(\alpha_{s_0})$ and $e(\alpha_{s_0})$ come from the same $2$-simplex $Y=[s_0,s_1,t]$.}
    \label{singletrianglefigure}
\end{figure}

Hence, the lemma follows. 
\end{proof}

\begin{defn}
Let $X$ be an IC of a non-overlapping bi-secondary structure $R=(S,T)$. We say $\epsilon$ is the minimal $S$-arc that nests $X$ and $\beta$ is the minimal $T$-arc that nests $X$ if and only if $\forall \mu\in X, \epsilon\prec_S \mu \prec_T\beta$ (when $\mu$ is flipped to the $S$ and $T$ sides of the diagram respectively). Two such arcs always exist since $R$ is a bi-secodary structure. The $1$-simplex $\{s,t\}\in K_1(R)$ with $\alpha_s=\epsilon,\alpha_t=\beta$ is called the \emph{up (mixed) edge} of $\langle C(X)\rangle$. All other mixed $1$-simplices of $\langle C(X)\rangle$ are called \emph{down (mixed) edges} of $\langle C(X)\rangle$.
\end{defn}

\begin{rmk}
The up edge of $C(X)$ is in fact present as a $1$-simplex of the complex $\langle C(X)\rangle$. To see this, it suffices to show that the up edge is a $1$-face of at least one decoration in $C(X)$.
\end{rmk}
\begin{proof}
Let $p=\min N(X)$. W.l.o.g., we can assume $p$ is an end point of an arc $\epsilon'$ in $S$. Note that $b(\epsilon)\le p\le e(\epsilon)$ and Similarly, $b(\beta)\le p\le e(\beta)$. Let $Y=\{s_0,s_1,t\}$ with $\alpha_{s_0}=\epsilon',\alpha_{s_1}=\epsilon,\alpha_t=\beta$. Then we must have $Y_p\in C(X)$ and the remark follows (See Figure~\ref{upedgeinclosurefigure}).

\begin{figure}[htbp]
    \centering
    \includegraphics[width=0.5\textwidth]{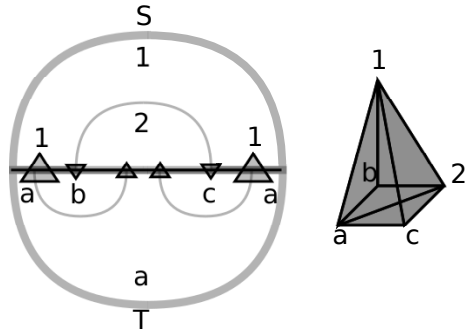}
    \caption{LHS: a bi-secondary structure with a single CC $X=\{\alpha_b,\alpha_c,\alpha_2\}$. The $1$-simplex $[1,a]$ is the up-edge of the CC. RHS: The closure of the LHS's CC.}
    \label{upedgeinclosurefigure}
\end{figure}

\end{proof}

\begin{rmk}\label{down}
Let $X$ be an IC of a non-overlapping bi-secondary structure $R=(S,T)$. Let $\epsilon$ be the minimal $S$-arc that nests $X$ and let $\beta$ be the minimal $T$-arc that nests $X$.
Let $\{s',t'\}$ be a down edge of $\langle C(X)\rangle$, and let $\alpha_{s'}$ and $\alpha_{t'}$ be the corresponding maximal arcs of the $s'$ and $t'$ loops respectively. Based on the annular construction in the proof of Lemma~\ref{issphere}, we know $\{\alpha_{s'},\alpha_{t'}\}\subset X\bigcup \{\epsilon,\beta\}$. Furthermore, the set $\{\alpha_{s'},\alpha_{t'}\}$ contains at most one arc from the set $\{\epsilon,\beta\}$. Hence $\{\alpha_{s'},\alpha_{t'}\}$ contains at least one arc from $X$.
\end{rmk}

We next reveal the tree-like structure of $\{\langle C(X)\rangle\}_{X=IC}$ mentioned above. We do this by investigating the poset order $\ll$ over the set of all ICs. 

\begin{lemma}\label{updown}
Let $X$ be an IC of a non-overlapping bi-secondary structure $R=(S,T)$ and let $\{s,t\}$ be the up edge of $\langle C(X)\rangle$, where $s\in S$, $t\in T$ and $\alpha_s=\epsilon$, $\alpha_t=\beta$. Then $X'$, the cover of $X$ under the $\ll$ poset order, is the unique IC such that $\langle C(X')\rangle$ contains $\{s,t\}$ as a down edge.
\end{lemma}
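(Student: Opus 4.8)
The plan is to show that the single $1$-simplex $\{s,t\}$ is shared between the two closures $\langle C(X)\rangle$ and $\langle C(X')\rangle$, appearing as the up edge of the former and as a down edge of the latter; this is precisely the gluing that assembles the spheres/triangles $\{\langle C(X)\rangle\}_{X=IC}$ into a tree. Write $\epsilon=\alpha_s$ for the minimal $S$-arc nesting $X$ and $\beta=\alpha_t$ for the minimal $T$-arc nesting $X$. The first move is to locate the $\ll$-cover $X'$ in terms of $\epsilon,\beta$, splitting on whether these two arcs cross. If $\epsilon$ and $\beta$ cross, they lie in a common CC; since each of $\epsilon,\beta$ nests $X$, this CC nests $X$, and by minimality of $\epsilon,\beta$ among all arcs nesting $X$ no IC can sit strictly between $X$ and it, so it is the cover $X'$, with both $\epsilon,\beta\in X'$. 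If $\epsilon,\beta$ do not cross they are nested, say $\beta$ inside $\epsilon$; here I would prove that the IC $X_\beta$ containing the inner arc $\beta$ lies entirely within the interval of $\epsilon$, by an induction along crossing-paths out of $\beta$ (an arc leaving $\epsilon$'s interval would have to cross $\epsilon$, impossible for a same-structure arc and otherwise forcing $\epsilon\in X_\beta$). This yields that $\epsilon$ is the minimal $S$-arc nesting $X_\beta$ and that $X_\beta=X'$. In all cases $\{\alpha_s,\alpha_t\}=\{\epsilon,\beta\}\subseteq X'\cup\{\epsilon',\beta'\}$ with at least one arc in $X'$, where $\epsilon',\beta'$ denote the minimal $S$-, $T$-arcs nesting $X'$; this is exactly the down-edge signature of Remark~\ref{down}.

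Next I would verify that $\{s,t\}$ is a genuine mixed $1$-simplex of $\langle C(X')\rangle$ and is not its up edge. Presence follows by exhibiting a decoration $Y'\in C(X')$ that has $\{s,t\}$ as a face, exactly as in the argument that the up edge of an IC lies in its own closure: starting from a shared vertex of the loops $s$ and $t$ (which exists because the up edge of $X$ is already present in $\langle C(X)\rangle$), one builds a $2$-simplex whose pure arc lies in $X'$. To see it is a down edge rather than the up edge of $X'$, note that the up edge of $X'$ is $\{\epsilon',\beta'\}$; since $X\ll X'$ strictly, at least one of $\epsilon,\beta$ lies in $X'$ and hence differs from both $\epsilon'$ and $\beta'$, so $\{s,t\}\neq\{\epsilon',\beta'\}$.

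Finally, for uniqueness, suppose $\{s,t\}$ is a down edge of $\langle C(X'')\rangle$ for some IC $X''$. By Remark~\ref{down} at least one of $\epsilon,\beta$ lies in $X''$, and since the ICs partition the arcs of $R$ (Remark~\ref{ICpartition}), $X''$ is the IC of $\epsilon$ or the IC of $\beta$ — at most two candidates, which coincide in the crossing case. In the nested case I would rule out the IC of the outer arc: there the inner arc sits strictly interior to the annular region of Lemma~\ref{issphere}, so by the band-order analysis of Lemma~\ref{glue} the pair $\{\epsilon,\beta\}$ cannot bound a decoration of that closure. This leaves $X''=X'$, giving uniqueness.

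I expect the main obstacle to be the nested case of the cover identification, i.e.\ rigorously matching the combinatorial $\ll$-cover with the simplicial down-edge incidence. The crux is the containment argument that the inner arc's IC stays inside the outer arc's interval (so that the outer arc is exactly the minimal nesting arc $\epsilon'$ or $\beta'$ of $X'$), combined with using the annular/band structure of Lemmas~\ref{issphere} and~\ref{glue} both to produce the decoration realizing $\{s,t\}$ in $\langle C(X')\rangle$ and to exclude the outer arc's IC in the uniqueness step; the crossing case, by contrast, is immediate from Remark~\ref{down}.
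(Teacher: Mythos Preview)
Your proposal is correct and follows essentially the same route as the paper: the same two-case split on whether $\epsilon$ and $\beta$ lie in a common IC (equivalently, cross), the same identification of $X'$ with the IC of the inner arc in the nested case, and the same uniqueness argument via Remark~\ref{down} ruling out the outer arc's IC. The only minor difference is that the paper produces the witnessing decoration in $C(X')$ concretely via the band order (taking the largest $p'\in N(X')$ below $\min N(X)$) rather than by analogy with the up-edge remark, while conversely your crossing-path containment argument in the nested case is more explicit than the paper's bare assertion that the inner arc lies in $X'$.
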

\begin{proof}

We distinguish the following two cases:\\

Case 1: $\epsilon$ and $\beta$ are contained in the same IC.
In this case, both $\epsilon$ and $\beta$ are contained in $X'$, since $\epsilon$ and $\beta$ are the minimal $S$-arc and $T$-arc respectively that both nest $X$. Let $p'$ be the largest nucleotide in $N(X')$ that is smaller than the smallest nucleotide $p\in N(X)$. The decoration $Y_{p'}\in C(X')$ thus contains $[s,t]$ as a mixed edge for $\alpha_s=\epsilon$ and $\alpha_t=\beta$. Since both $\epsilon$ and $\beta$ are contained in $X'$, by Remark~\ref{down}, $[s,t]$ is a down mixed edge of $\langle C(X')\rangle$. Furthermore, since $X'$ is the unique IC that contains $\epsilon$ and $\beta$, $X'$ is the unique IC such that $\langle C(X')\rangle$ contains $[s,t]$ as a down mixed edge. \\

Case 2: $\epsilon$ and $\beta$ are contained in different ICs. In this case, $\epsilon$ and $\beta$ must be nested within one another. W.l.o.g. we can assume $b(\beta)\le b(\epsilon)\le e(\epsilon)\le e(\beta)$, i.e. $\epsilon$ is nested by $\beta$. Then $\epsilon$ is contained in $X'$. Since $\epsilon$ and $\beta$ are contained in different ICs, $\beta$ must be the minimal $T$-arc that also nests $X'$.

Let $p'$ be the largest nucleotide in $N(X')$ that is smaller than the smallest nucleotide $p\in N(X)$.The decoration $Y_{p'}\in C(X')$ thus contains $[s,t]$ as a mixed edge for $\alpha_s=\epsilon$ and $\alpha_t=\beta$. Since $\epsilon$ is contained in $X'$, by Remark~\ref{down}, $[s,t]$ is a down mixed edge of $\langle C(X')\rangle$. On the other hand, let $X''$ be the IC that contains $\beta$, since $\epsilon$ is nested by $\beta$, $[s,t]$ can not be a $1$-face of $\langle C(X'')\rangle$ (See Figure~\ref{updowncase2figure}).

\begin{figure}[htbp]
    \centering
    \includegraphics[width=0.25\textwidth]{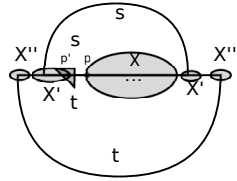}
    \caption{Here $\epsilon=\alpha_s$, $\beta=\alpha_t$ and $X''\ll X'\ll X$.}
    \label{updowncase2figure}
\end{figure}

Since $X'$ is the unique IC that contains $\epsilon$, $X'$ is the unique IC such that $\langle C(X')\rangle$ contains $[s,t]$ as a down mixed edge. Hence the lemma follows.
\end{proof}

\begin{rmk}
Lemma~\ref{updown} shows us that $\{\langle C(X)\rangle\}_{X=IC}$, and hence $K(R)$, has a tree-like structure (See Figure~\ref{treelikestructurefigure}).

\begin{figure}[htbp]
    \centering
    \includegraphics[width=1.0\textwidth]{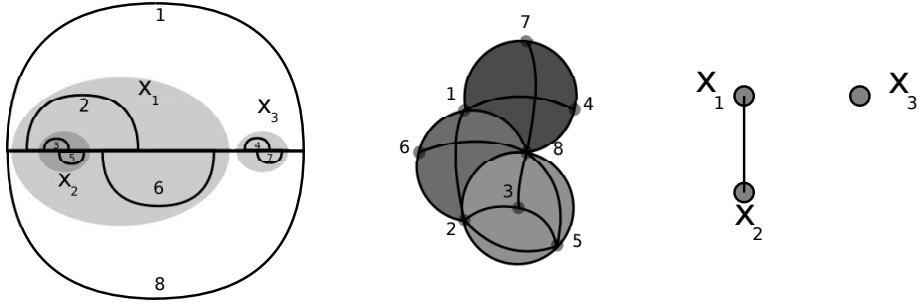}
    \caption{LHS: a bi-secondary $R$ structure with tree CCs, $X_1=\{\alpha_2,\alpha_6\},X_2\{\alpha_3,\alpha_5\}$ and $X_3=\{\alpha_4,\alpha7\}$. MS: the CC spheres in $K(R)$. RHS: the tree-like structure of $\{\langle C(X)\rangle\}_{X=IC}$. RHS: Note that only $X_2\ll X_1$, while $\langle C(X_1)\rangle$ and $\langle C(X_3)\rangle$ share the mixed up edge $[1,8]$.}
    \label{treelikestructurefigure}
\end{figure}

\end{rmk}

\section{Crossing Components and Homology Ranks for Non-overlapping Bi-structures}

\begin{thm}\label{thm-non-overlap}
Let $R=(S,T)$ be a non-overlapping bi-secondary structure. Let $r(H_2(R))$ denote the rank of the second homology group of $K(R)$. Then $$r(H_2(R))=|\chi(R)|.$$
\end{thm}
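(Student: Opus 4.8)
The inequality $|\chi(R)|\le r(H_2(R))$ is already in hand: Lemma~\ref{issphere} realizes each CC $X$ as a subcomplex $\langle C(X)\rangle$ homeomorphic to $S^2$, Lemma~\ref{disjoint} shows that distinct such subcomplexes carry disjoint families of $2$-simplices, and their fundamental classes are therefore linearly independent in $H_2(R)$. The whole task is thus the reverse bound $r(H_2(R))\le|\chi(R)|$, i.e.\ to show that the sphere classes $\{[\langle C(X)\rangle]\}_{X\in\chi(R)}$ already span $H_2(R)$. The plan is to prove this by a direct analysis of the group of $2$-cycles, organized along the tree of ICs from Lemma~\ref{updown}.

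The first step is to observe that for a non-overlapping $R$ the complex $K(R)$ has no $3$-simplices. A $3$-simplex $[s_0,s_1,t_0,t_1]$ would require a nucleotide $q$ lying in two distinct $S$-loops and in two distinct $T$-loops; but lying in two loops of the same structure forces $q$ to be an endpoint of an arc of that structure, so $q$ would be an endpoint of both an $S$-arc and a $T$-arc and hence have degree at least four in the arc diagram, contradicting non-overlap. Consequently $C_3(K(R))=0$, so $B_2=0$ and $H_2(R)=Z_2(K(R))=\ker\partial_2$ (recovering in passing the freeness of $H_2$, as a subgroup of the free group $C_2$). It therefore suffices to determine the rank of the cycle group $Z_2$. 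Fix $z=\sum_Y c_Y\,Y\in Z_2$. If $\gamma(Y)$ lies in no CC then, by Lemma~\ref{exposededge}, the pure edge of $Y$ is $Y$-exposed, hence a free face lying in no other $2$-simplex; its coefficient in $\partial z$ is $\pm c_Y$, forcing $c_Y=0$. Thus every cycle is supported on $\bigcup_{X\in\chi(R)}\{Y:\gamma(Y)\in X\}$, and since each $2$-simplex has a unique pure arc lying in a unique IC we may split $z=\sum_{X\in\chi(R)}z_X$, where $z_X$ is supported on the triangles of $\langle C(X)\rangle$. The goal becomes showing that each $z_X$ is an integer multiple of the fundamental cycle of the sphere $\langle C(X)\rangle$.

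The core is a leaf-to-root induction on the IC tree establishing exactly this. Orient each sphere by its fundamental cycle. By Lemma~\ref{glue} every edge of $\langle C(X)\rangle$ is shared by exactly two of its triangles, which induce opposite orientations on that edge; a pure edge moreover lies in no triangle outside $\langle C(X)\rangle$ (its two triangles have the same pure arc), while a mixed edge is shared with another IC only when it is the up edge of $X$ (meeting the parent) or the up edge of some child IC realized as a down edge of $X$ (in which case it carries two triangles from each side, by Lemma~\ref{glue} and Lemma~\ref{nocopy}). For a trivial IC, $z_X=0$ by the exposed-edge step above. For a CC $X$, assume inductively that every child contribution $z_{X_c}$ is a fundamental multiple (or zero). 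A fundamental multiple contributes $0$ to $\partial z$ along its own up edge, and a zero child contributes nothing; hence along every down edge of $X$ the only surviving terms in $\partial z=0$ are the two from $X$ itself, which are thereby forced to have equal coefficient. The same equality is automatic across every pure edge and every internal mixed edge, since those lie in exactly two triangles of $X$. Thus all triangles of $X$ meeting along an edge other than the single up edge are locked to one common coefficient. Because the dual graph of a triangulated $2$-sphere is connected and bridgeless — deleting one edge corresponds to un-gluing two triangles along a single edge, which does not separate a closed surface — removing only the up-edge link keeps it connected, so all coefficients of $z_X$ coincide and $z_X=k_X[\langle C(X)\rangle]$. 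At the root the up edge borders no parent, so it too is a locking edge and the conclusion still holds.

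Putting these together yields $z=\sum_{X\in\chi(R)}k_X[\langle C(X)\rangle]$, so $Z_2=H_2(R)$ is freely generated by the sphere classes and $r(H_2(R))=|\chi(R)|$. I expect the inductive locking step to be the main obstacle, and it is where the earlier structure is really used: one must pin down the exact edge-incidence dichotomy (two triangles on internal and pure edges versus four on the up/down links), which rests on Lemma~\ref{glue}, Lemma~\ref{nocopy} and Remark~\ref{down}, and one must justify that removing a single up-edge link leaves the sphere's dual graph connected so that the coefficient equality propagates across the whole component. The tree structure of Lemma~\ref{updown} guarantees that each piece attaches to its parent along precisely one such edge, which is what makes the leaf-to-root propagation close up without leaving any triangle unlocked.
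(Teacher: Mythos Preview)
Your proposal is correct and follows essentially the same approach as the paper: both observe that non-overlap forces $K_3(R)=\varnothing$ so $H_2=Z_2$, use Lemma~\ref{exposededge} to kill the non-crossing contributions, split a $2$-cycle along the IC partition, and run a leaf-to-root induction on the $\ll$-tree of Lemma~\ref{updown} to show each $z_X$ is an integer multiple of the fundamental class of the sphere $\langle C(X)\rangle$. The only substantive variation is at the up edge: the paper first deduces $\partial_2(z_X)=m_{Z^U}Z^U$ and then kills $m_{Z^U}$ via $\partial_1\partial_2=0$, whereas you bypass this by invoking bridgelessness of the dual graph of a triangulated $S^2$ to propagate the coefficient equality across the up-edge link---a valid and equally short alternative.
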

\begin{proof}
The basic idea behind this proof is to recursively decompose $Ker(\partial_2)$, following the tree-like structure of $K(R)$ such that each CC will contribute exactly one basis vector to $Ker(\partial_2)$.

Since $R$ is a non-overlapping bi-secondary structure, $K_3(R)=\varnothing$. Therefore $Im(\partial_3)=0$ and thus $H_2(R)\cong Ker(\partial_2)$. Let us consider $\tau\in Ker(\partial_2)$ where
$$
\tau=\sum_{Y\in K_2(R)}n_Y Y.
$$
Note that for each $Y$, its corresponding pure arc $\gamma(Y)$ is either crossing or non-crossing. Furthermore, if $\gamma(Y)$ is crossing, then it must be contained in exactly one of the CCs by definition. Assume $|\chi(R)|=k$ and let $X_1,X_2,\ldots,X_k$ be the CCs of $R$. We can further decompose $\tau$ into the following sum
$$
\tau=\sum_{\gamma(Y)\; non-crossing}n_Y Y+\sum_{j=1}^k\sum_{\gamma(Y^j)\in X_j}n_{Y^j}Y^j.
$$
Since $\tau\in Ker(\partial_2)$, we have

$$
\partial_2(\tau)=\sum_{\gamma(Y)\; non-crossing}n_Y\partial_2(Y)+\sum_{j=1}^k\partial_2(\sum_{\gamma(Y^j)\in X_j}n_{Y^j} Y^j)$$
$$
=\sum_{\gamma(Y)\; non-crossing}n_Y\overline{Z^P}+\sum_{\gamma(Y)\; non-crossing}n_Y(\overline{Z_1^M}+\overline{Z_2^M})+$$
$$+\sum_{j=1}^k\partial_2(\sum_{\gamma(Y^j)\in X_j}n_{Y^j}Y^j)=0,
$$
where $\overline{Z^P}$ and $\overline{Z_{1,2}^M}$ are the signed pure $1$-faces and the mixed $1$-faces of $Y$ respectively, such that $\gamma(Y)$ is non-crossing. By Lemma~\ref{exposededge}, we know that for all non-crossing arcs $\gamma(Y)$, $\overline{Z^P}$ is exposed. Thus the coefficient of $\overline{Z^P}$ in $\partial_2(\tau)$ is $n_Y$. Since $\partial_2(\tau)=0$, we must have $n_Y=0$. Thus, in the expression of $\partial_2(\tau)$, the sum over non-crossing arcs disappears.

Next, we will focus on the term $j\partial_2(\sum_{\gamma(Y^1)\in X_1}n_{Y^1}Y^1)$ in the expression of $\partial_2(\tau)$, where $X_1$ is a CC that is minimal w.r.t. $\ll$ among all other CCs of $R$ (i.e. $X_1$ does not nest any other CC of $R$). We will rewrite this term as a linear combination of $1$-faces of $\langle C(X_1) \rangle$ while further partitioning said linear combination based on the types of $1$-faces in $\langle C(X_1) \rangle$, namely, pure, down mixed and up mixed
$$
\partial_2(\sum_{\gamma(Y^1)\in X_1}n_{Y^1}Y^1)=\sum_{Z^P\in \langle C(X_1) \rangle\; pure}m_{Z^P}Z^P+$$
$$+\sum_{Z^D\in \langle C(X_1) \rangle \;down \;mixed}m_{Z^D}Z^D+m_{Z^U} Z^U.
$$
The first sum is taken over all pure $1$-faces $Z_p$ of $\langle C(X_1) \rangle$.  The second sum is taken over all down mixed $1$-faces of $\langle C(X_1) \rangle$. The last term corresponds to the unique up mixed edge of $\langle C(X_1)\rangle$.

Let us examine the first sum. Note that each pure edge of $K_1(R)$ corresponds to a unique arc in $R$, namely, the pure arc of any decoration that contains said pure edge (see Remark~\ref{purearc}). By Remark~\ref{ICpartition} we can conclude that for any $Z^P$, $X_1$ is the unique IC such that $\langle C(X_1)\rangle$ contains $Z^P$ as a pure edge. Therefore, the coefficient of $Z^P$ in $\partial_2(\tau)$ is $m_{Z^P}$. Since $\partial_2(\tau)=0$, we must have $m_{Z^P}=0$. Hence, the first sum in the decomposition of $\partial_2(\sum_{\gamma(Y^1)\in X_1}n_{Y^1}Y^1)$ disappears.

Now, for the second sum, since $X_1$ is a CC that does not nest any other CC in $R$, by Lemma~\ref{updown}, each $Z^D$ is either: the up edge of some $Y^{Z^D}$ where $\gamma(Y^{Z^D})$ is non-crossing, OR it is not contained in any other $\langle C(X') \rangle$ for $X'$ another CC of $R$. We can then conclude that the coefficient of $Y^{Z^D}$ in $\tau$ must be zero, since if $\gamma(Y^{Z^D})$ is non-crossing then its coefficient in $\tau$ must be $0$ by the argument above regarding the first sum. Therefore, regardless, the coefficient of $Z^D$ in $\partial_2(\tau)$ is $m_{Z^D}$. Since $\partial_2(\tau)=0$, we must have $m_{Z^D}=0$. Hence, the second sum in the decomposition of $\partial_2(\sum_{\gamma(Y^1)\in X_1}n_{Y^1}Y^1)$ disappears.

We can thus conclude that

$$\partial_2(\sum_{\gamma(Y^1)\in X_1}n_{Y^1}Y^1)=m_{Z^U} Z^U.$$

Note however that
$$
0=\partial_1(\partial_2(\sum_{\gamma(Y^1)\in X_1}n_{Y^1}Y^1))=m_{Z^U}\partial_1(Z^U).
$$
Since $K(R)$ is a simplicial complex, each of its $1$-faces contains two distinct $0$-faces. Therefore, $\partial_1(Z^U)\neq 0$. As a result, we must have $m_{Z^U}=0$. Hence we can conclude that if $\tau\in Ker(\partial_2)$ then $$\partial_2(\sum_{\gamma(Y^1)\in X_1}n_{Y^1}Y^1)=0.$$

We now apply the above argument recursively, from bottom to top, following the $\ll$ poset order on the CCs of $R$. Thus, for each CC $X_j\in R$, we will eventually have  
$$\partial_2(\sum_{\gamma(Y^j)\in X_j}n_{Y^j} Y^j)=0.$$  Since for each CC $X_j\in R$, $\langle C(X_j) \rangle$ is a triangulation of a $2$-sphere, by \citep{hatcher2005algebraic}, $$H_2(\langle C(X_j) \rangle)\cong \mathbb{Z}.$$
Thus, there exists $V_j=\sum_{\gamma(Y^j)\in X_j}v_{Y^j}Y^j$, such that $\sum_{\gamma(Y^j)\in X_j}n_{Y^j}Y^j$ can be uniquely represented as $l_j V_j$, for some $l_j\in \mathbb{Z}$. Furthermore, By Lemma~\ref{disjoint}, all closures $C(X_j)$ for $X_j$ a CC of $R$ are disjoint. Thus $\{V_j\}_{1\leq j\leq k}$ are linearly independent. Therefore, any $\tau\in Ker(\partial_2)$ can be uniquely represented as 
$\tau=\sum_{j=1}^k l_j V_j$. As a result, we have $$H_2(R)\cong Ker(\partial_2)\cong \mathbb{Z}^k=\mathbb{Z}^{|\chi(R)|}\implies r(H_2(R))=|\chi(R)|,$$ and the theorem follows.
\end{proof}

\section{Scoops, Splits and Homology Ranks for arbitrary Bi-structures}

Let $R(S,T)$ be a bi-secondary structure over $[n]$ and let $$P=\{p\in \{1,\ldots,n\}| deg(p)=4 \text{ in the arc diagram of } R\}.$$  The two arcs that meet at $p$ determine four mutually intersecting loops $s_0,s_1,t_0,t_1$ which contribute a unique $3$-simplex $W\in K_3(R)$ to the simplicial complex $K(R)$ (See Figure~\ref{filledtetrahedrafigure}).

\begin{figure}[htbp]
    \centering
    \includegraphics[width=0.5\textwidth]{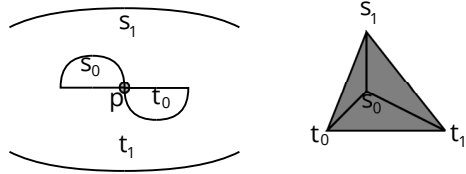}
    \caption{LHS: $s_0\cap s_1\cap t_0\cap t_1=\{p\}$. RHS: the $3$-simplex $W=[s_0,s_1,t_0,t_1]$.}
    \label{filledtetrahedrafigure}
\end{figure}

Lemma~\ref{existenceofmixed} guarantees that, among the $1$-faces of the simplex $W$, at least one of them, call it $Z\in K_1(R)$, is $W$-exposed. W.l.o.g. we can assume that $Z=[s_1,t_0]$.

\begin{defn}
Let $\mathcal{R}_p$ be a topological retraction $$\mathcal{R}_p:K(R)\longrightarrow \overline{K(R)}$$
where $\overline{K(R)}=\dot\bigcup_{d=0}^{\infty} \overline{K_d(R)}$ is the induced topological space of the simplicial complex obtained by removing the $1$-simplex $Z$ and all subsequent higher dimensional simplices of $K(R)$ that have $Z$ as a face. Namely, 
$$\overline{K_0(R)}=K_0(R),\overline{K_1(R)}=K_1(R)\setminus\{Z\},$$$$\overline{K_2(R)}=K_2(R)\setminus\{[s_0,s_1,t_0],[s_1,t_0,t_1]\},$$
$$\overline{K_3(R)}=K_3(R)\setminus\{W\},\overline{K_d(R)}=K_d(R)\text{ for all }d\ge 4.$$
We call $\mathcal{R}_p$ the scoop of $R$ at $p$ (See Figure~\ref{scoopfigure}).

\begin{figure}[htbp]
    \centering
    \includegraphics[width=0.5\textwidth]{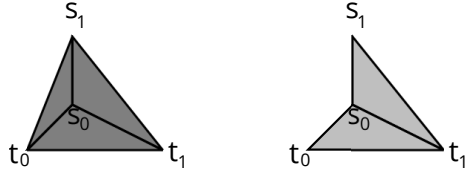}
    \caption{LHS: (before the scoop) the $3$-simplex $W=[s_0,s_1,t_0,t_1]$. RHS: (after the scoop) removing the $1$-simplex $Z=[s_1,t_0]$ and all higher dimensional simplices that contain it as a face, we are left with the two $2$-simplices $[s_0,s_1,t_1]$ and $[s_0,t_0,t_1]$.}
    \label{scoopfigure}
\end{figure}

\end{defn}

\begin{rmk}\label{scoop-iso}
 Since for each $p\in P$, $\mathcal{R}_p$ is a retraction, we can immediately conclude that
$$H_2(\circ_{p\in P}\mathcal{R}_p(K(R)))\cong H_2(R).$$    
\end{rmk}

\begin{defn}
Let $\mathcal{S}_p$ be a mapping that takes the bi-secondary structure $R$ over $[n]$ to the bi-secondary structure $R'$ over $[n+1]$ by splitting the nucleotide $p$ into two adjacent nucleotides $q_1,q_2$ such that the arcs in $R$ that have one endpoint at $p$ now have endpoints at $q_1$ and $q_2$ respectively and do not cross. We call $\mathcal{S}_p$ a split of $R$ at $p$. 
\end{defn}

\begin{rmk}
For each $p\in P$, it is immediately clear that such a mapping $\mathcal{S}_p$ always exists.
\end{rmk}

\begin{lemma}\label{scoopsplit}
Let $R(S,T)$ be a bi-secondary structure over $[n]$ and let $P$ be defined as above. Furthermore let $p\in P$ be fixed. Then,
$$K(S_p(R))\cong R_p(K(R)).$$
I.e. the simplicial complex of $R$ split at $p$,  is homeomorphic as a topological space to the scoop of $R$ at $p$.
\end{lemma}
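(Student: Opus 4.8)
The plan is to construct an explicit homeomorphism between the two simplicial complexes by matching their simplices dimension by dimension, and to argue that the combinatorial effect of splitting the degree-four nucleotide $p$ is precisely the local effect of the scoop. The key observation is that both operations are purely local: they only alter the structure in the immediate vicinity of $p$ (equivalently, near the four loops $s_0,s_1,t_0,t_1$ meeting at $p$), while leaving the rest of the diagram and its nerve untouched.

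First I would analyze the loop structure of $\mathcal{S}_p(R)=R'$ near the split. When $p$ is split into adjacent vertices $q_1,q_2$, the two crossing arcs meeting at $p$ are separated so they no longer cross; consequently the four loops $s_0,s_1,t_0,t_1$ of $R$ that all shared the vertex $p$ are redistributed among the loops of $R'$. The crucial point is that the mutual intersection $s_0\cap s_1\cap t_0\cap t_1=\{p\}$ disappears, since no single vertex of $R'$ lies in all four loops simultaneously. I would verify that after the split, the intersection pattern of the corresponding loops in $R'$ is exactly: $s_0,s_1,t_0,t_1$ survive as loops whose pairwise and triple intersections persist except for the triples containing the exposed pair. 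Concretely, with $Z=[s_1,t_0]$ the $W$-exposed edge, the split destroys precisely the $1$-simplex $Z$, the two $2$-simplices $[s_0,s_1,t_0]$ and $[s_1,t_0,t_1]$, and the $3$-simplex $W$, while retaining the $2$-simplices $[s_0,s_1,t_1]$ and $[s_0,t_0,t_1]$. This matches the definition of $\overline{K(R)}=\mathcal{R}_p(K(R))$ exactly.

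The core of the argument is therefore to establish a bijection $\Phi$ between the loops of $R'$ and a labeling of the vertices $\overline{K_0(R)}=K_0(R)$ that induces, for every $d$, a bijection $\overline{K_d(R)}\to K_d(R')$ commuting with the face maps. Away from $p$ this is the identity, since neither the loops nor their intersections are affected. Near $p$ I would check that a subset of loops of $R'$ forms a simplex if and only if the corresponding subset of loops of $R$ forms a simplex not having $Z$ as a face — using that splitting $p$ removes $p$ from the common intersection of exactly those loop-tuples that contained $Z$. This gives an isomorphism of abstract simplicial complexes, which upgrades to a homeomorphism of geometric realizations.

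The main obstacle I anticipate is the careful bookkeeping in the case analysis inherited from Lemma~\ref{existenceofmixed}: one must confirm that in each of the geometric configurations (and their symmetric variants) the specific exposed edge $Z$ produced by the split is consistent with the one removed by the scoop, and that no unintended loop mergers or splits occur elsewhere in the diagram when $p$ is separated into $q_1,q_2$. In particular I would need to rule out that the two new vertices $q_1,q_2$, now both unpaired within some loop or sitting on the newly uncrossed arcs, accidentally create or destroy any intersection among loops other than the four at $p$. Once locality is firmly established, the remaining verification that $\Phi$ respects the simplicial structure is routine, and the homeomorphism of realizations follows since an isomorphism of abstract simplicial complexes always induces a homeomorphism of their geometric realizations.
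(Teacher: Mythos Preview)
Your proposal is correct and follows essentially the same approach as the paper: both arguments reduce to showing that the split at $p$ alters the loop-intersection pattern only locally, destroying exactly the exposed mixed edge $Z$ (and the simplices containing it) while preserving all other intersections, and both rely on the case analysis of Lemma~\ref{existenceofmixed} to verify which edge is actually lost. The paper carries this out concretely by writing down the four modified loops $\overline{s_0},\overline{s_1},\overline{t_0},\overline{t_1}$ in each configuration (e.g.\ $\overline{s_0}=(s_0\setminus\{p\})\cup\{q_1\}$, etc.) and checking their pairwise intersections directly, whereas you frame the same verification more abstractly as a simplicial isomorphism $\Phi$; the substance is the same.
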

\begin{proof}
Let $W=[s_0,s_1,t_0,t_1]\in K_3(R)$, with $s_0\le  s_1\le t_0\le t_1$ (in terms of the simplicial ordering on $K(R)$) be the $3$-simplex determined by the two arcs that meet at $p$. Since $\{p\}\subseteq s_0\cap s_1\cap t_0\cap t_1$, $\alpha_{s_0}$ and $\alpha_{t_0}$ must share at least one endpoint. W.l.o.g., we distinguish the following two cases (See Figure~\ref{existenceofmixedfigure}):\\

{\it Case $1$:} $b(\alpha_{s_0})<e(\alpha_{s_0})=b(\alpha_{t_0})<e(\alpha_{t_0})$.\\
In this case, after splitting $R$ at $p$, we obtain $b(\alpha_{\overline{s_0}})<e(\alpha_{\overline{s_0}})<b(\alpha_{\overline{t_0}})<e(\alpha_{\overline{t_0}})$ with the new loops $\overline{s_0}=(s_0\setminus\{p\})\cup\{q_1\}$, $\overline{t_0}=(t_0\setminus\{p\})\cup\{q_2\}$, $\overline{s_1}=(s_1\setminus\{p\})\cup\{q_1,q_2\}$ and finally $\overline{t_1}=(t_1\setminus\{p\})\cup\{q_1,q_2\}$.\\
Note that, $\overline{s_1}\cap x\ne\varnothing\Leftrightarrow s_1\cap x\ne\varnothing,\forall x\in R$ and $\overline{t_1}\cap x\ne\varnothing\Leftrightarrow t_1\cap x\ne\varnothing,\forall x\in R$. Also, $\overline{s_0}\cap x\ne\varnothing\Leftrightarrow s_0\cap x\ne\varnothing,\forall x\in R\setminus\{t_0\}$ and $\overline{t_0}\cap x\ne\varnothing\Leftrightarrow t_0\cap x\ne\varnothing,\forall x\in R\setminus\{s_0\}$.Finally, $\overline{s_0}\cap\overline{t_0}=\varnothing$. Thus, in this case we must have $K(S_p(R))\cong R_p(K(R))$.(See Figure~\ref{splitfigure}).

\begin{figure}[htbp]
    \centering
    \includegraphics[width=0.5\textwidth]{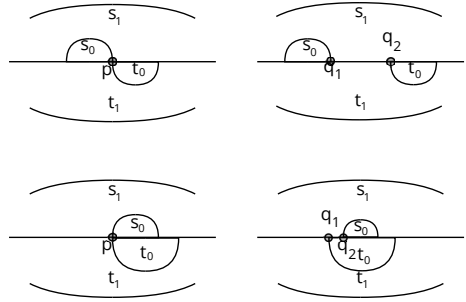}
    \caption{LHS: before the split. RHS: after the split. TOP: Case 1 split. BOTTOM: Case 2 split.}
    \label{splitfigure}
\end{figure}

{\it Case $2$:} $b(\alpha_{s_0})=b(\alpha_{t_0})<e(\alpha_{s_0})<e(\alpha_{t_0})$. \\
In this case, after splitting $R$ at $p$, we obtain $b(\alpha_{\overline{t_0}})<b(\alpha_{\overline{s_0}})<e(\alpha_{\overline{s_0}})<e(\alpha_{\overline{t_0}})$ with the new loops $\overline{s_0}=(s_0\setminus\{p\})\cup\{q_2\}$, $\overline{t_0}=(t_0\setminus\{p\})\cup\{q_1\}$, $\overline{s_1}=(s_1\setminus\{p\})\cup\{q_1,q_2\}$ and finally $\overline{t_1}=(t_1\setminus\{p\})\cup\{q_1\}$.\\
Note that, $\overline{s_1}\cap x\ne\varnothing\Leftrightarrow s_1\cap x\ne\varnothing,\forall x\in R$ and $\overline{t_1}\cap x\ne\varnothing\Leftrightarrow t_1\cap x\ne\varnothing,\forall x\in R\setminus\{s_0\}$. Also, $\overline{s_0}\cap x\ne\varnothing\Leftrightarrow s_0\cap x\ne\varnothing,\forall x\in R\setminus\{t_1\}$ and $\overline{t_0}\cap x\ne\varnothing\Leftrightarrow t_0\cap x\ne\varnothing,\forall x\in R$. Finally, $\overline{s_0}\cap\overline{t_1}=\varnothing$. Hence in this case as well, we must have $K(S_p(R))\cong R_p(K(R))$.\\

The arguments for the remaining cases can be obtained by symmetry from the ones above and have similar arguments. The lemma then follows.
\end{proof}

Finally, we are in the position to prove the main result of this paper.

\begin{thm}
Let $R=(S,T)$ be an arbitrary bi-secondary structure. Then $$r(H_2(R))=|\chi(R)|.$$
\end{thm}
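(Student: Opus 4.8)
The plan is to reduce the arbitrary case to the non-overlapping case already settled in Theorem~\ref{thm-non-overlap}, using the scoop and split operations as the bridge. First I would split $R$ at every degree-four nucleotide, setting $R'=(\circ_{p\in P}\mathcal{S}_p)(R)$. Since each $p\in P$ is the endpoint of exactly one $S$-arc and one $T$-arc (a vertex lies on at most one arc of each secondary structure, so degree four forces one arc from each side), a single split $\mathcal{S}_p$ lowers the degree at $p$ to three without raising the degree at any other nucleotide. Hence the splits at distinct nucleotides are independent and may be composed in any order, and the resulting structure $R'$ is non-overlapping.

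Next I would assemble the homological comparison. Iterating Lemma~\ref{scoopsplit} one nucleotide at a time gives $K(R')\cong \circ_{p\in P}\mathcal{R}_p(K(R))$ as topological spaces, and Remark~\ref{scoop-iso} identifies $H_2(\circ_{p\in P}\mathcal{R}_p(K(R)))\cong H_2(R)$. Combining these, $H_2(R')\cong H_2(R)$, so $r(H_2(R'))=r(H_2(R))$. Because $R'$ is non-overlapping, Theorem~\ref{thm-non-overlap} yields $r(H_2(R'))=|\chi(R')|$. The theorem will therefore follow once I show that splitting preserves the number of crossing components, i.e. $|\chi(R')|=|\chi(R)|$.

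This last combinatorial identity is the heart of the argument and the step I expect to require the most care. The key observation is that a split induces a bijection between the arcs of $R$ and the arcs of $R'$ (the only change is that the two arcs meeting at $p$ now terminate at the adjacent nucleotides $q_1,q_2$), and that this bijection preserves the crossing relation, hence is an isomorphism of line graphs. Indeed, the two arcs at $p$ share the endpoint $p$ and so do not cross in $R$; by the defining property of $\mathcal{S}_p$ they still do not cross in $R'$. For any other arc $\mu$, neither endpoint of $\mu$ sits at $p$ (since $p$ has degree four, only the two arcs at $p$ end there), so locally expanding $p$ into the adjacent pair $q_1,q_2$ preserves every strict order relation between the endpoints of $\mu$ and those of the two split arcs; consequently $\mu$ crosses the $S$-arc (resp. the $T$-arc) at $p$ in $R'$ precisely when it did in $R$. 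Since all remaining endpoints and crossings are untouched, the line graphs of $R$ and $R'$ are isomorphic, their non-trivial connected components correspond bijectively, and $|\chi(R)|=|\chi(R')|$. Iterating over all of $P$ and chaining the equalities gives $r(H_2(R))=r(H_2(R'))=|\chi(R')|=|\chi(R)|$, as claimed.
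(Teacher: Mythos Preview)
Your proposal is correct and follows essentially the same route as the paper: split at every degree-four nucleotide, invoke Lemma~\ref{scoopsplit} and Remark~\ref{scoop-iso} to identify $H_2(R)\cong H_2(R')$, apply Theorem~\ref{thm-non-overlap} to the non-overlapping $R'$, and finish by observing that splitting preserves the crossing relation and hence $|\chi(R)|$. If anything, you supply more detail than the paper does on why $R'$ is non-overlapping and why the line graphs of $R$ and $R'$ are isomorphic.
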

\begin{proof}
Denote by $R'=\circ_{p\in P}S_P(R)$ the bi-secondary structure obtained by sequential splits of $R$ at all nucleotides $p\in P$ where $P$ is defined as above. By Lemma~\ref{scoopsplit} we must have that $$K(R')\cong\circ_{p\in P}R_p(K(R)).$$ From this homeomorphism we obtain $$H_2(K(R'))\cong H_2(\circ_{p\in P}R_p(K(R))).$$ By Remark~\ref{scoop-iso} $$H_2(\circ_{p\in P}\mathcal{R}_p(K(R)))\cong H_2(R).$$ Hence $H_2(R)\cong H_2(R')$. Now $R'$ is non-overlapping since each nucleotide of degree four in the arc diagram of $R$ has been split into two nucleotides each of degree three in the arc diagram of $R'$. Thus, by Theorem~\ref{thm-non-overlap}, we have that $r(H_2(R'))=|\chi(R')|$. Finally, since each split introduces no new crossing arcs in $R'$, the number of crossing components is conserved under splitting. Hence, we must have that $|\chi(R')|=|\chi(R)|$. Thus $$r(H_2(R))=r(H_2(R'))=|\chi(R')|=|\chi(R)|$$ and the theorem follows.
\end{proof}


\section{Declarations of interest}
None.

\section{Acknowledgments}
We gratefully acknowledge the comments from Fenix Huang. Many thanks to Thomas Li, Ricky Chen and Reza Rezazadegan for discussions.

\section*{References}


\begin{thebibliography}{1}
\expandafter\ifx\csname url\endcsname\relax
  \def\url#1{\texttt{#1}}\fi
\expandafter\ifx\csname urlprefix\endcsname\relax\def\urlprefix{URL }\fi
\expandafter\ifx\csname href\endcsname\relax
  \def\href#1#2{#2} \def\path#1{#1}\fi

\bibitem{bura2019loop}
A.~C. Bura, Q.~He, C.~M. Reidys, Loop homology of bi-secondary structures,
  arXiv preprint arXiv:1904.02041.

\bibitem{hatcher2005algebraic}
A.~Hatcher, Algebraic topology, Tsinghua University Press, 2005.

\end{thebibliography}

\end{document}